\documentclass[11 pt]{article}
\usepackage{comment}
\usepackage{amsmath,amscd}
\usepackage{amstext}
\usepackage{amssymb,epsfig}
\usepackage{amsthm}
\usepackage{amsfonts}
\usepackage{fancybox, floatrow}
\usepackage{graphicx}
\usepackage[all]{xy}
\usepackage{subeqnarray}
\usepackage{enumitem}
\usepackage{cancel}
\usepackage{dsfont}
\usepackage{braket}
\usepackage[pagebackref, colorlinks = true,
linkcolor = red,
urlcolor  = blue,
citecolor = blue,
anchorcolor = blue]{hyperref}
\hypersetup{pdftitle={\@title},pdfauthor={\@author}}

\usepackage[dvipsnames]{xcolor}

\usepackage[utf8]{inputenc}      
\usepackage{tikz}



\newtheorem{mainth}{Main theorem}
\newtheorem{proposition}{Proposition}[section]

\newtheorem{lemma}[proposition]{Lemma}

\newtheorem{definition}[proposition]{Definition}

\def\ie{{\em i.e.,\ }}
\def\eg{{\em e.g.\ }}
\def\eps{\varepsilon}
\def\N{{\mathbb N}}
\def\Z{{\mathbb Z}}

\def\R{{\mathbb R}}

\textwidth 16 cm
\textheight 24 cm
\oddsidemargin 0.5 cm
\evensidemargin 0 cm
\topmargin -0.95 cm
\footskip 1.5 cm


\newcommand {\CL}{{\mathcal L}}

\newcommand {\CP}{{\mathcal P}}

\def\s{\sigma}
\def\l{\lambda}

\def\1{ {\hbox{{\it 1}} \!\! I} }

\def\al{\alpha}
\def\be{\beta}
\def\de{\delta}
\def\ga{\gamma}
\def\om{\omega}
\def\Om{\Omega}

\def\8{\infty}

\def\disp{\displaystyle}

\bibliographystyle{plain}
\renewcommand{\S}{\Sigma}
\usepackage{float,color}
\newcommand{\wt}{\widetilde}
\newcommand{\wh}{\widehat}
\newcommand{\ul}{\underline}
\newcommand{\BBone}{{1\!\!1}}

\newcommand{\e}{\mathbf{ e}}
\def\ninf{{n\to+\8}}
\def\minf{{m\to+\8}}

\def\epz{{\eps\to0}}

\makeatletter
\newcommand*\bdot{\mathpalette\bdot@{.7}}
\newcommand*\bdot@[2]{\mathbin{\vcenter{\hbox{\scalebox{#2}{$\m@th#1\bullet$}}}}}
\makeatother

\theoremstyle{definition}
\newtheorem{remark}{Remark}

\renewcommand{\L}{\Lambda}

\allowdisplaybreaks
\begin{document}
\title{Limits of equilibrium states for coupled weakly interacting systems. Application to the measure of maximal entropy}
\author{ Renaud Leplaideur\thanks{R. Leplaideur thanks Fasic 18 PaciDyn, CNRS-LIA FuMa , UQ and CNRS-IEA LOTO for financial supports.} 
    \footnote{Université de la Nouvelle-Calédonie 145, Avenue James Cook - BP R4 98 851 -
Nouméa Cedex Nouvelle Calédonie
    \texttt{renaud.leplaideur@unc.nc}}
}
\date{}
\maketitle

\begin{abstract}
We study metastability for symbolic dynamic. We prove that for a global system given by two independent sub-systems linked by a hole, and for a Lipschitz continuous potential, the global equilibrium state converges, as the hole shrinks, to a convex combination of the two independent equilibria in each component. Two kinds of convergence occur, depending on the assumptions on how long an orbit has to stay in each well. 

As a by-product, we show that this can be applied to a geometrical system inspired from \cite{tokman1} and for the measure of maximal entropy.

\bigskip
\noindent
{\bf Keywords}: thermodynamic formalism, Gibbs states, metastability, selection of measures .\\
{\bf AMS classification}: 37.

\end{abstract}

\section{Introduction}
\subsection{Background}
Metastability is a common phenomenon in nature. It has been studied from many different viewpoints. In this paper we want to focus on the dynamical systems and the statistical mechanics viewpoints. 
These two viewpoints are close, with, however, some differences. 

\medskip
Rigorous results  in the framework of dynamical systems in statistical mechanics can be found in \cite{Ruelle-rig}, and first  developments of  rigorous theories of metastability in the
framework of thermodynamics and statistical mechanics of Gibbsian ensembles can be found in \cite{Penrose-lebowitz}. There, metastability is presented as a system starting in  some phase that is likely to take a long time to escape, but  once out of this metastable phase, the system is extremely unlikely to return. 

\medskip
Along the years, with advances in the theory, this assumption on irreversibility of escapes  has been relaxed. 
 Nowadays, a typical metastable dynamical system consists of two adjacent systems linked via a small hole. 
The infrequent transitions between \emph{stable states}  are thus consequences of the small probability to enter into the hole. 
Therefore,  typical orbits will alternate long stays in one of the systems, hopping suddenly to the other one.

In that direction, most of studies focus on the \emph{physical-measure}, as the SRB measure or the Absolutely Continuous Invariant Measure (with respect to the Lebesgue measure) in dimension 1, see \eg \cite{bahsoun-saussol, bahsoun-vaienti, dolgopyat, dolgopyat-wright, kloeckner, ruelle-diff}.

Echoing these results, and within the statistical mechanics viewpoint,  the notion of \emph{quasi-stationary distribution} is introduced (see \eg  \cite{lelievre1, lelievre2})
Metastable systems is a process entering in wells. If this well is indeed a metastable region for the dynamics, then the process is led by this quasi-stationary distribution, 
which is  absolutely continuous with respect to the Lebesgue measure in the well.

\bigskip 
 On the other hand, and still within the statistical mechanics viewpoint, there are studies in  what is equivalent to symbolic spaces within the dynamical viewpoint. In that case, physical measures or Lebesgue measure have no meaning. We mention for instance \cite{Mahieu-Picco} and more recently \cite{Bovier-DenHollander, dubbeldam}. They deal with the mean-fields assumption for Curie-Weiss-Potts models. 
 
 Mean field systems can be interpreted within  the  \emph{ non-linear thermodynamical formalism} in the framework of dynamical systems (see \cite{leplaideur-watbled1, leplaideur-watbled2, BKL}).  Instead of maximizing the usual free energy
 $$P(\be):=\max\left\{h_{\mu}(T)+\be.\int \phi\,d\mu\right\},$$
 we maximize the quadratic quantity $\disp P_{2}(\be):= \max_{\mu} \left\{h_{\mu}(T)+\be.\left(\int \phi\,d\mu\right)^{2}\right\}$. Then, it turns out that any maximizing measure for $P_{2}(\be)$ is also a maximizing measure for the \emph{linear} $P(\wt\be)$, with $\wt \be=\wt\be(\be)$. 
 
 Consequently, it makes sense to study metastability for symbolic dynamics and more general Gibbs measures.
 

 \bigskip
 In this paper, the system we study  is given by two independent subshifts of finite type having the same pressure for some fixed potential. These subshifs are linked by some hole. To represent the size of the coupling hole that goes to 0, we remove the trajectories that do not stay sufficiently long time in each of the systems. 
 
 Under two different assumptions on how long the orbits have to stay in one of the system before hopping to the other one, Theorems \ref{th-main1-2} and \ref{th-main2-1} show that  the unique equilibrium state for the global potential $\phi$ converges to the  a fixed combination of the two independent equilibria in each independent system. Surprisingly, this does not depend on the choice of the potential.

 \bigskip
 Going back to more geometrical results, 
 Gonz\'alez-Tokman, Hunt and Wright studied in \cite{tokman1} the case of two separated piecewise expanding maps on the interval coupled by a small hole for $\eps>0$. For each $\eps>0$, there exists a unique absolutely continuous measure whose accumulation points for $\epz$ must be a convex combination of the two absolutely invariant measures  of the separated systems.  It has been proved that the measure converges and the limit depends on the ratio of the sizes of the holes (going to one part of the system to the other one).

For these geometrical  dynamical systems, two measures are usually considered as more relevant. The absolutely continuous one and the one with maximal entropy. Existence and uniqueness of measure of maximal entropy for piecewise expanding maps on the interval have been well studied, and we refer to \cite{buzzi-course} for a survey on that topic. 

It is thus a natural question to enquire the same kind of questions as in \cite{tokman1} but for the measure of maximal entropy. 

\bigskip
We emphasize that this question is technically much harder than the linear response for absolutely continuous invariant measures. Indeed, for this later case, the conformal measure is naturally given because it is the Lebesgue measure. Therefore, the main difficulty consists in studying the family of the densities as the parameter changes. For the piecewise expanding case, arguments like the one given in \cite{keller-liverani} can be used to control how the density changes. 

For the maximal entropy case, there is no more natural reference measure. Therefore, the study needs to work in both directions: to control the conformal measure and the associated density. This plus the fact that the systems (thus the combinatorics on orbits) changes with the parameter make the study difficult.

The difficulty is overcome by showing that  one geometrical dynamical system as in \cite{tokman1} is conjugated to a symbolic one where we can apply  Theorem \ref{th-main1-2}. In that case, this yields  that the measure of maximal entropy goes to $(\frac12,\frac12)$ combination of the two independent measures of maximal entropy (see Theorem \ref{th-main1-3}). 

%
%
%
%
%

\subsection{Settings for symbolic dynamics}\label{settingLargeMatrices}


\subsubsection{Settings at initial stage}
Let $A=(a_{i,j})$ and $D=(d_{i,j})$ be irreducible $K\times K$ and $N\times N$ matrices with entries in $\{0,1\}$.


We consider a transition matrix $M_{0}$, which is a $(K+N)\times (K+N)$ whose restriction to the both diagonal blocks $K\times K$ and $N\times N$ are respectively $A$ and $D$. We also assume that $M_{0}$ is irreducible: there exists some $n$ such that $M_{0}^{n}$ has only positive entries. Let $\S_{0}$ be the subshift of finite type with transition matrix $M_{0}$. 

A point $\om\in \S_{0}$ is an infinite sequence/word $\om=\om_{0}\om_{1}\ldots$ of digits. Digits may be either in $\{\al_{1},\ldots , \al_{K}\}$ or  in $\{\de_{1},\ldots \de_{N}\}$, the whole alphabet being $\{\al_{1},\ldots, \al_{K},\de_{1},\ldots, \de_{N}\}$. Digits in $\{\al_{1},\ldots , \al_{K}\}$  are said to be the $\al$-digits, digits in $\{\de_{1},\ldots \de_{N}\}$ are said to be the $\de$-digits. 
Each infinite word $\om$ is thus an alternation of strings of $\al$-digits and $\de$-digits. It shall be written as $\al^{m_{0}}\de^{m_{1}}\al^{m_{2}}\ldots$.
The surviving sets will be $\S_{A}$ and $\S_{D}$, that are sets of infinite words only with $\al$-digits  or only with $\de$-digits. 

We consider the usual distance on $\S_{0}$ defined by 
$$d(\om,\om')=2^{-n(\om,\om')},$$
where $n(\om,\om')=\min\{k\in\N,\ \om_{k}\neq \om'_{k}\}$ (this may be $+\8$ iff $\om=\om'$). 

A $n$-cylinder $[\om_{0}\ldots\om_{n-1}]$ is the set of $x=x_{0}x_{1}\ldots$ such that $x_{i}=\om_{i}$ for $i\le n-1$. The union of the 1-cylinders of the $\al$-digits form the neighborhood $\Om_{A}:=B(\S_{A},1)$. The union of the $1$-cylinders of the $\de$-digit form  the neighborhood $\Om_{D}:=B(\S_{D},1)$.

\bigskip
We consider a Lipschitz continuous potential $\phi:\S_{0}\to\R$ and we assume that both  $\S_{A}$  and $\S_{D}$ have the same pressure associated to $\phi$, that is denoted by $P$. Associated equilibrium states are respectively denoted by $\mu_{A}$ and $\mu_{D}$.

\medskip

\paragraph{The coupling hole.}

For simplicity, the $\al_{j}$'s  (with subscript $j$) are the $\al$-digits that may be followed by a $\de$-digits. These letters are the $\de_{l}$'s (with subscript $l$). Similarly, the $\de_{i}$'s are the $\de$-digits that may be followed by at least one $\al$-digit, say the $\al_{k}$'s. 

The cylinders $[\al_{j}\de_{l}]$ and the cylinders $[\de_{i}\al_{k}]$  are called {\bf green cylinders}. 
The union of the green cylinders is called the hole, and has one part in $\Om_{A}$ (trajectories going from $\Om_{A}$ to $\Om_{D}$, \ie cylinders $[\al_{j}\de_{l}]$ ) and one part in $\Om_{D}$ (\ie cylinders $[\de_{i}\al_{k}]$). 

\subsubsection{First reduction of the coupling hole and Main Theorem \ref{th-main1-2}}

The reduction of the hole is done as follows. 
\begin{enumerate}

\item One considers $A'$ a $K\times K$ matrix and $D'$ a $N\times N$ matrix with entries in $\{0,1\}$ such that if one entry in $A$ or $D$ is zero, then the corresponding entry in $A'$ or $D'$ is also equal to zero. 
\item The matrix $A'$ (resp.$D'$) has strictly less transitions than $A$ (resp. $D$).
\item The lines correponding to the $\al_{k}$'s in $A'$ and the $\de_{l}$'s in $D'$ have some entries equal to 1. 
\end{enumerate}
This means that there are less transitions  with $A'$ than with $A$, and with $D'$ than with $D$, but there exists transitions $\al_{k}\al\ldots$ and $\de_{l}\de\ldots$ for all the $\al_{k}$'s and the $\de_{l}$'s. Conditions 1. and 2. can be summarized by the  formulas 

$$A'<A \text{ and }D'<D.$$

Then, one fixes two increasing sequence of integers $(n_{m})$  and $(n'_{m})$ both going to $+\8$ as $m\to+\8$, and one considers the new subshift of finite type $\S_{m}\subset \S_{0}$ with pressure $P_{m}$ such as 
 the forbidden words (in addition to those in $\S_{0}$)  are
 \begin{itemize}
\item $\al_{j}\de_{l}\de^{n}\de_{i}\al_{k}$ with $n<n'_{m}-2$,
\item $\al_{j}\de_{l}\de^{n}\de_{i}\al_{k}$ with $n\ge n'_{m}-2$ but $\de_{l}\de^{n'_{m}-2}$ is not $D'$-eligible, 
\item $\de_{i}\al_{k}\al ^{n}\al_{j}\de_{l}$ with $n<n_{m}-2$,
\item $\de_{i}\al_{k}\al ^{n}\al_{j}\de_{l}$ with $n\ge n_{m}-2$ but $\al_{k}\al ^{n_{m}-2}$ is not $A'$-eligible. 
\end{itemize}

We point out\footnote{and left it to the reader to check.} that the global system $\S_{m}$ is irreducible since $\S_{A}$ and $\S_{D}$ are irreducible. 


\medskip
The hole at step $m$ is the union of cylinders 
of the form $[\al_{j}\de_{l}\de^{n'_{m}-2}]$ and $[\de_{i}\al_{k}\al^{n_{m}-2}]$, with the restriction that $\de_{l}\de^{n'_{m}-2}$ is $D'$-eligible and $\al_{k}\al^{n_{m}-2}$ is $A'$-eligible.

\bigskip
As $\S_{m}$ is irreducible, it admits a unique equilibrium state for $\phi$, denoted  $\wh\mu_{m}$. Because $A'$ and $D'$ have less allowed transitions than $A$ and $D$, subshifts $\S_{m}$ form a decreasing sequence of subshifts, converging to $\S_{A}\sqcup \S_{D}$. 
Uniqueness of the equilibrium state for irreducible subshifts  yields that $P_{m}$ decreases to $P$.

Then our first result is: 
\begin{mainth}
\label{th-main1-2}
Let $\wh\mu_{m}$ be the unique equilibrium state for $\S_{m}$ associated to $\phi$.
Assume that $\disp \lim_{\minf}\dfrac{n_{m}}{n'_{m}}=\theta\in]0,+\8[$ holds as $m$ goes to $+\8$. 
Then $\wh\mu_{m}$ converges to $\dfrac1{2}(\mu_{A}+\mu_{D})$
 \end{mainth}

\subsubsection{Second reduction of the coupling hole and Main Theorem \ref{th-main2-1}}
The second way to cut the coupling is actually more natural for the symbolic dynamics. This is why we state it here now, in view to develop this work later. 

The reduction is obtained by picking $A'=A$ and $D'=D$.

To be sure that the system is still irreducible we require that $n_{1}$ and $n'_{1}$ are sufficiently  big such that $A^{n_{1}}$ and $D^{n'_{1}}$ have only positive entries. Then, $\S_{m}$ admits a unique equilibrium state for $\phi$. We denote it by $\wh\mu_{m}$.
Our second result is: 
\begin{mainth}
\label{th-main2-1}
Let $\wh\mu_{m}$ be the unique equilibrium state for $\S_{m}$ associated to $\phi$. 
Let $\theta$ be in $[0,+\8]$. 
If $\disp \lim_{\minf}\dfrac{n_{m}}{n'_{m}}=\theta$, then,  $\wh\mu_{m}$ converges to $\dfrac1{1+\theta}(\theta.\mu_{A}+\mu_{D})$ as $m$ goes to $+\8$. 
\end{mainth}

\begin{remark}
\label{rem-theteth}
We emphasize that in Theorem \ref{th-main2-1} we may have $\theta=0$ or $\theta=+\8$. Obviously, in that later case, $\dfrac{\theta}{1+\theta}=1$ and $\dfrac1{1+\theta}=0$.  
$\blacksquare$\end{remark}

\subsection{Settings for geometrical dynamical systems}
The family of maps we consider is given by Figure \ref{fig-map1}. They are assumed to be \emph{almost} piecewise expanding. 

Piecewise expanding means that there exists $\l>1$ such that on each branch $|T_{\eps_{0}}(x)-T_{\eps_{0}}(y)|\ge \l|x-y|$.
In our construction, such an inequality holds everywhere except on the top of the cusp that is the part which shrinks to a single point. However, the maps are non-uniformly expanding, in the sense that they are conjugated to a map with positive Lyapunov exponent.

\begin{figure}[htbp]
\begin{center}
\includegraphics[scale=0.8]{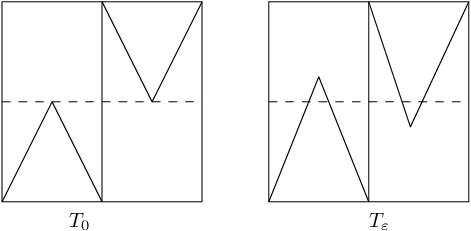}
\caption{{\bf The family of maps $T_{\eps}$}}
\label{fig-map1}
\end{center}
\end{figure}

%
In the following the Left-subsystem is the map $T_{0}$ restricted to $\left[0,\frac12\right]$ and the Right-subsystem is the map $T_{0}$ restricted to $\left[\frac12,1\right]$. Each admits an unique measure of maximal entropy, respectively denoted by $\mu_{top,L}$ and $\mu_{top, R}$.

We also emphasize that we do not need the maps to be symmetric (despite that for commodity,  this appears on the picture). 

Then we get 
\begin{mainth}\label{th-main1-3}
There is way to construct a family of maps $T_{\eps}$  with $\eps\to 0$ such that 
the measure of maximal entropy $\mu_{top,\eps}$ for $\eps>0$ goes to $\frac12(\mu_{top,L}+\mu_{top,R})$. 
\end{mainth}

\subsection{Open questions, Plan of the paper and Aknowledgment}

\subsubsection{Open questions}
For the geometrical dynamical system, it would be interesting to get the convergence without using the symbolic dynamics. However as it is said above, the difficulty is to construct for positive $\eps$ the conformal measure for the transfer operator. We are aware of  attempts in that direction, but as far as we know, without success. 

Theorem \ref{th-main1-3} states that there is a way to get the convergence. It is thus limited in the choices of the $\eps$ and for the branches. It seems clear we cannot hope convergence for any maps $T_{\eps}$ with the good shape. We can for instance introduce big distorsion between the left and the right sides. Such a distorsion could annihilate the assumption $\theta\neq 0,+\8$ in Theorem \ref{th-main1-2}, and then affect the convergence. 
It would be nice to inquire how big is the set of maps for which one can say something. 

Theorem \ref{th-main2-1} gives a nice control on the ratio of the measures ``on each side''. It would be interesting to develop this study for more than 2 independent systems and a more complicated coupling between these systems. 

It is not impossible that our strategy to get Theorem \ref{th-main1-3} (via a conjugacy) could work for the ACIM measure. Indeed, this measure is the Gibbs measure associated to the potential $-\log T_{\eps}'$. It might be that we could control de conjugacy as the hole shrinks in term of $T_{\eps}'$. Then, we could apply an extension of  Theorem \ref{th-main1-2} but with a converging sequence of potentiels in the symbolic space.

\subsubsection{Plan of the paper}

The proofs for  Theorem \ref{th-main1-2} and Theorem \ref{th-main2-1} involve inducing schemes. We consider the first return map from the hole to the hole (at step $m$) and study an adapted transfer operator. 
For that purpose we do recalls in Section \ref{sec-recallandmore} on Thermodynamic formalism and inducing schemes. 

We first recall some classical properties on transfer operators for irreducible subshifts of finite type. 

In \ref{subsec-extension} we explain that even if we have a thin subsystem with empty interior in a bigger system, it is possible to consider  transfer operators for the small system but ``acting on the big system''. This was already mentioned in  \cite{leplaideur-flot} and we recall some facts about it. 

In Subsection \ref{sec-theremoalaleplaide} we recall a way to recover the \emph{global} equilibrium state from the thermodynamical formalism on an inducing scheme (here the first return map in the hole). This is a repetition  but adapted to our case of what can be found in  \cite{leplaideur-synthese}. 

Then, the proof of Theorem \ref{th-main1-2} is done in Section \ref{sec-proof-main-1-2}. It has three steps. In the first subsection we study the convergence for the conformal measure (for induced scheme). We introduce some quantities $\l'_{m}$ and $\l_{m}$ that play a role in the convergence. 
In the next Subsection we show that there is convergence for the eigen-function (for the induced scheme). This depends on a normalization that depends on/if one of the quantities $\l'_{m}$ and $\l_{m}$ goes to $+\8$ (or not). 
Then, proof of the convergence is done in \ref{subsec-computereturn} where we compute expectations of the return times. 

The proof of Theorem \ref{th-main2-1} is done in Section \ref{sec-proofmain2-1}. The main work is to explain how we can deduce it from Theorem \ref{th-main1-2} by doing $A'=A$ and $D'=D$. 

Finally, proof of Theorem \ref{th-main1-3} is done in  Section \ref{sec-proofmain3}. In the first subsection we construct  a geometric model conjugated to $\S_{0}$, and then get a sequence of maps $\wt T_{\eps_{m}}$ that are conjugated to $\S_{m}$. In the second subsection we show that these maps are conjugated to some maps $T_{\eps_{m}}$ with the good shape.

\subsubsection{Acknowledgment}
Part of this work has been done during visits in UQ. We thus warmly thank Cecilia Gonz\'alez-Tokman and Jason Atnip for nice discussions. 

\section{Recalls and technical results on thermodynamical formalism}
\label{sec-recallandmore}
\subsection{Equilibrium states and transfer operator}
We remind that for a given dynamical system $(X,T)$ and a potential $V:X\to \R$, the pressure  is defined by 
$$\CP:=\sup_{\mu\ T-inv}\left\{h_{\mu}(T)+\int V\,d\mu\right\},$$
where $h_{\mu}(T)$ is the Kolmogorov of the $T$-invariant probability $\mu$. Any measure realizing the supremum is called an equilibrium state. 

In the case where the dynamical systems is an irreducible subshift of finite type and $V$ is Lipschitz continuous then, the system admits a unique equilibrium state. Furthermore it is a Gibbs measure, and is obtained via the transfer operator as follows. 

The transfer operator is defined by 
$$L(g)(x):=\sum_{y,\ T(y)=x}e^{V(y)}g(y),$$
with $g:X\to \R$.  If $(X,T)$ is an irreducible subshift of finite type and $V$ is Lipschitz continuous, then $L$ acts on continuous function and also on Lipschitz-continuous functions. 
Furthermore it satisfies the spectral gap property: 

\begin{enumerate}
\item The spectral radius $e^{\l}$ is a simple dominating eigenvalue on the Lipschitz functions,
\item there exists a probability measure $\nu$, called the conformal measure such that for every continuous $g$, 
$$\int L(g)\,d\nu=e^{\l} \int g\,d\nu,$$
\item there exists a unique Lipschitz continuous eigenfunction $H$ (up to the normalization $ \disp\int H\,d\nu=1$) satisfying $L(H)=e^{\l}.H$. Furthermore $H$ is positive and is equal to 
\begin{equation}
\label{eq-1-hlimitecesaro}
\disp \lim_{n\to+\8}\frac1n\sum_{k=0}^{n-1}e^{-k\l}L^{k}(\BBone),
\end{equation}
where the convergence is for $||\ ||_{\8}$-norm. 
\item The function $H$ is bounded away from below and from above (bounds comes from mixing). 
\item The operator $L$ acts as 
\begin{equation}
\label{equ2-splitope}
\forall n\le 0,\quad L^{n}(g)=e^{n\l} \int g\,d\nu.H+e^{n(\l-\rho)}\Psi^{n}(g),
\end{equation}
where $\Psi$ is bounded with spectral radius lower than 1 on Lipschitz continuous functions. 
\item The unique equilibrium state $\mu$ is equal to $H\otimes \nu$, and $P=\l$. 
\end{enumerate}

\bigskip
We remind here what \emph{conformality} for $\nu$ means. 
If $[C]$ is a cylinder where $T$ is one-to-one, then 

\begin{equation}
\label{eq1-conform}
\nu([C])=e^{-\l}\int_{T([C])}e^{V(T^{-1}(.))}d\nu,
\end{equation}
where $T^{-1}$ is well-define since $T$ is one-to-one on $[C]$.

\subsection{Extension of the eigenfunctions for transfer operators}
\label{subsec-extension}
\subsubsection{A general result}

We consider two irreducible subshifts of finite type on the {\bf same finite alphabet}, respectively given by two transition matrices $E'$ and $E$. We assume that the matrices satisfies $E'<E$, in the sense that if a transition $i\to j$ is allowed in $E'$ then it is allowed in $E$. 

The respective subshifts are denoted by $\S_{E'}$ and $\S_{E}$. In the case when some transitions are allowed in $E$ but not in $E'$, then $\S_{E'}$ has empty-interior in $\S_{E}$. 

If we pick some potential $\varphi:\S_{E}\to\R$, we point-out here that it is possible to extend the transfer operator for $\S_{E'}$ and make it act on $\S_{E}$. This was already mentioned in \cite{leplaideur-flot}. 

\begin{proposition}
\label{prop-extension eigenfunc}
The transfer operator $\CL_{E'}$ associated to the potential $\varphi$ can be extended to an operator acting on continuous and on Lipschitz functions defined on $\S_{E}$. The spectral properties, including decompositions still hold for that extended operator. 
In particular, the eigenfunction $H_{E'}$ can be extended to an eigenfunction for the extended operator on $\S_{E}$. It is Lipschitz continuous. 
\end{proposition}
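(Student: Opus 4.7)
The plan is to define the extension of $\CL_{E'}$ explicitly on $C(\S_E)$ and to recover the full spectral picture by splitting along the filtration given by functions vanishing on $\S_{E'}$. For $g\in C(\S_E)$ and $x\in\S_E$ I set
$$\wt{\CL}_{E'}(g)(x):=\sum_{a\,:\,ax_0\text{ is }E'\text{-admissible}}e^{\varphi(ax)}g(ax).$$
Since $E'<E$, whenever $ax_0$ is $E'$-allowed it is automatically $E$-allowed, so $ax\in\S_E$ and $g(ax)$ is defined. The sum is finite, and the estimate $d(ax,ax')\le \tfrac12\,d(x,x')$ combined with the Lipschitz assumption on $\varphi$ gives preservation of $C(\S_E)$ and of $\mathrm{Lip}(\S_E)$.

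The structural observation is that the closed subspace $B_0:=\{g\in C(\S_E):g|_{\S_{E'}}=0\}$ is $\wt{\CL}_{E'}$-invariant: when $x\in\S_{E'}$, every admissible preimage $ax$ still lies in $\S_{E'}$, hence $g(ax)=0$. On the quotient $C(\S_E)/B_0\cong C(\S_{E'})$ the extended operator reduces to $\CL_{E'}$, which carries the known spectral gap (dominating simple eigenvalue $e^{\lambda}$, Lipschitz eigenfunction $H_{E'}$, conformal measure $\nu_{E'}$).

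To complete the decomposition on $\S_E$ I then bound the spectral radius of $\wt{\CL}_{E'}$ on $B_0\cap\mathrm{Lip}(\S_E)$. If $a=a_1\dots a_n$ is $E'$-admissible with $a_n x_0$ being $E'$-allowed, irreducibility of $\S_{E'}$ provides $z\in\S_{E'}$ with $z_i=a_i$ for $1\le i\le n$, so $d(ax,z)\le 2^{-n}$ and consequently $|g(ax)|=|g(ax)-g(z)|\le\mathrm{Lip}(g)\cdot 2^{-n}$ for every $g\in B_0$. Paired with the uniform upper bound $\wt{\CL}_{E'}^n(\BBone)(x)\le C\,e^{n\lambda}$---obtained by comparing with $\CL_{E'}^n(\BBone)(y)$ for some $y\in\S_{E'}$ with $y_0=x_0$ through the bounded-distortion estimate $|S_n\varphi(ax)-S_n\varphi(ay)|\le C'$---this yields $\|\wt{\CL}_{E'}^n(g)\|_\infty\le C\,\mathrm{Lip}(g)\,(e^{\lambda}/2)^n$, so the spectral radius on $B_0$ is strictly smaller than $e^{\lambda}$.

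Feeding the quotient (which is $\CL_{E'}$ with its spectral gap) together with the subspace $B_0$ (with strictly smaller spectral radius) into a standard Ionescu-Tulcea--Marinescu quasi-compactness argument yields the decomposition \eqref{equ2-splitope} for $\wt{\CL}_{E'}$. The conformal measure remains $\nu_{E'}$, now viewed on $\S_E$ with support $\S_{E'}$, because $\wt{\CL}_{E'}$ restricted to functions living on $\S_{E'}$ coincides with $\CL_{E'}$. Finally, starting from any Lipschitz extension $f:\S_E\to\R$ of $H_{E'}$, the difference $\wt{\CL}_{E'}(f)-e^{\lambda}f$ lies in $B_0$; since $e^{\lambda}I-\wt{\CL}_{E'}$ is invertible on $B_0$ one extracts a correction $h\in B_0$ such that $\wt H_{E'}:=f+h$ is a Lipschitz eigenfunction of $\wt{\CL}_{E'}$ extending $H_{E'}$. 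I expect the spectral-radius bound on $B_0$ to be the main obstacle, and it rests on the geometric observation that any $E'$-admissible length-$n$ concatenation, viewed inside $\S_E$, sits within $2^{-n}$ of $\S_{E'}$; once that is in hand, the rest is routine from the spectral-gap theory for $\CL_{E'}$.
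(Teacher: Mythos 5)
Your proposal is correct, but it takes a genuinely different route from the paper's. The paper's proof is much terser: it simply observes that the extended operator is defined by the same inverse branches (any $E'$-eligible preimage of $x\in\S_E$ is a point of $\S_E$), that the Lasota--Yorke inequality therefore holds verbatim for the extended operator on $\mathrm{Lip}(\S_E)$, and then invokes Ionescu-Tulcea \& Marinescu to get quasi-compactness and the spectral decomposition. It does not explicitly track the peripheral spectrum. You instead exploit the invariant ideal $B_0=\{g : g|_{\S_{E'}}=0\}$, identify the quotient with $\CL_{E'}$ acting on $C(\S_{E'})$, and bound the spectral radius on $B_0$ by the key geometric observation that any order-$n$ $E'$-admissible preimage $ax$ sits within $2^{-n}$ of $\S_{E'}$, yielding $\|\wt\CL^n_{E'}g\|_\infty \le C\,\mathrm{Lip}(g)\,(e^\lambda/2)^n$ for $g\in B_0$. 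This is a sharper statement than what the paper records, and it makes explicit why the dominant eigenvalue of the extension is still $e^\lambda$ (rather than being an unexamined output of ITM), and how the eigenfunction is produced: any Lipschitz extension $f$ of $H_{E'}$ is corrected inside $B_0$ via $(e^\lambda I - \wt\CL|_{B_0})^{-1}$. The trade-off is that the paper's argument is shorter and leans on the standard ITM machinery, while yours is more self-contained and transparently identifies the peripheral spectrum of the extension with that of $\CL_{E'}$. Two cosmetic remarks: your indexing should read $z_{i-1}=a_i$ (positions $0,\dots,n-1$) so that $d(ax,z)\le 2^{-n}$; and the passage from the $\|\cdot\|_\infty$ decay on $B_0$ to a spectral radius bound on $B_0\cap\mathrm{Lip}$ deserves one sentence, namely that the essential spectral radius is already $\le e^{\lambda-\rho}<e^\lambda$ by Lasota--Yorke, so any peripheral eigenvalue on $B_0$ would have an eigenfunction violating your $(e^\lambda/2)^n$ bound.
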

\begin{proof}
Since $\S_{E'}$ and $\S_{E}$ are defined on the same alphabet, any 1-cylinder for $E'$ is included into a 1-cylinder for $E$. The crucial point is that a transfer operator is defined by the inverse branches. 
For $x=x_{0}x_{1}\ldots$ in $\S_{E}$, the transfer operators $\CL^{n}_{E'}$ considers pre-images of the form 
$$y_{-n}y_{-n+1}\ldots y_{-1}x_{0}x_{1}\ldots,$$
with $y_{-n}y_{-n+1}\ldots y_{-1}x_{0}$ $E'$-eligible (hence $E$-eligible).

Therefore, the Lasota-Yorke inequality (which is the key point to get the spectral gap) holds for this extended transfer operator $\CL_{E'}$ defined on continuous and Lipschitz-continuous functions from $\S_{E}$ to $\R$. We can thus apply the Ionescu-Tulcea \& Marinescu theorem, which gives the spectral decomposition. 
\end{proof}

We however point-out that if the eigen-function $H_{E'}$ can be extended to an eigenfunction defined on $\S_{E}$, there is still a unique eigen-measure, that is $\nu_{E'}$. It has full-support in $\S_{E'}$ which has empty-interior in $\S_{E}$. By construction, the restriction of $H_{E}$ to $\S_{E'}$ is $H_{E'}$ and we still have 
$$\int H_{E}\,d\nu_{E'}=\int H_{E'}\,d\nu_{E'}=1.$$

\subsubsection{Application in our case}

%
%
%

In the following we shall use Prop. \ref{prop-extension eigenfunc} to several cases: 

\paragraph{Extension of $\S_{A'}$ in $\S_{A}$ and $\S_{D'}$ in $\S_{D}$.}
In that case we use Prop. \ref{prop-extension eigenfunc} with $E'=A'$ (resp. $E'=D'$) and $E=A$ (resp. $E=D$), if necessary restricted to 1-cylinders in common to $A'$ and $A$ (resp. $D'$ and $D$). 

We shall define by $\wt H_{A'}$ and $\wt H_{D'}$ the respective extended eigen-functions to $\S_{A}$ and $\S_{D}$.

The spectral decomposition of the operator are still given by 
\begin{equation}
\label{equ1-claprime-extend}
\forall n\ge 0,\ \forall g\ \text{Lipschitz } \e^{-nP_{A'}}\CL_{A'}^{n}(g)=\int g\,d\nu_{A'}\wt H_{A'}+e^{-n\rho}\Psi_{A'}^{n}(g),
\end{equation}
and 
\begin{equation}
\label{equ1-cldprime-extend}
\forall n\ge 0,\ \forall g\ \text{Lipschitz } \e^{-nP_{D'}}\CL_{D'}^{n}(g)=\int g\,d\nu_{D'}\wt H_{D'}+e^{-n\rho}\Psi_{D'}^{n}(g).
\end{equation}
 The spectral gap is  $\rho>0$. We can always assume it is the same for both operators. 

\paragraph{Extension of $\CL_{A}$ and $\CL_{D}$ respectively to $\Om_{A}$ and $\Om_{D}$}
In that case we use Prop. \ref{prop-extension eigenfunc} with $E'=A$ and $E=M_{0}$ but restricted to $\Om_{A}$, that is cylinders $[\al]$ for which inverse branches for $\CL_{A}$ make sense. 

Similarly we use it with $E'=D$ and $E=M_{0}$ restricted to $\Om_{D}$. 

 For simplicity we will keep the same notation for the extended operators except for eigenfunction that we shall denote by $\wt H_{A}$ and $\wt H_{D}$.

The spectral decomposition of the operator are still given by 
\begin{equation}
\label{equ1-cla-extend}
\forall n\ge 0,\ \forall g\ \text{Lipschitz } \e^{-nP}\CL_{A}^{n}(g)=\int g\,d\nu_{A}\wt H_{A}+e^{-n\rho}\Psi_{A}^{n}(g),
\end{equation}
and 
\begin{equation}
\label{equ1-cld-extend}
\forall n\ge 0,\ \forall g\ \text{Lipschitz } \e^{-nP}\CL_{D}^{n}(g)=\int g\,d\nu_{D}\wt H_{D}+e^{-n\rho}\Psi_{D}^{n}(g),
\end{equation}
since $e^{P}$ is the unique and simple dominating eigenvalue. The spectral gap is  $\rho>0$. We can always assume it is the same for both operators. 

\subsection{Inducing scheme, local and global equilibrium states}
\label{sec-theremoalaleplaide}

\subsubsection{What is local equilibrium and link with global equilibrium}

We recall here a method to recover/built the \emph{global} equilibrium state $\wh\mu_{m}$ from an inducing scheme and what we call a \emph{local} equilibrium state. Many of the results here can be found in \cite{leplaideur-synthese}. 

For simplicity we give the statements with the settings we shall use later. 

\bigskip
We consider the first return map, say $g_{m}$ into the union of the green cylinders, say $G$. The first return time is denoted by $r_{m}$. 
The letter $m$ reminds that the map depends of the size of the coupling hole. 

Now, we define the transfer operator as 
$$\CL_{G,m}(f)(x):=\sum_{y\in G,\ g_{m}(y)=x}e^{S_{r_{m}(y)}(\phi)(y)-r_{m}(y)P_{m}}f(y).$$

Because all the green-cylinders have the same length, 2, for any green-cylinder, all the points in that cylinder have the same inverse-branches for $g_{m}$. Then it turns out that $\CL_{G,m}$ acts on continuous and on Lipschitz functions defined on $G$. Furthermore, the classical theory can be adapted and the following holds:
\begin{enumerate}
\item $\CL_{G,m}$ has 1 for spectral radius. It is a simple and unique dominating eigenvalue. 
\item There is a conformal measure, say $\nu_{m}$, that is the eigen-measure for $\CL_{G,m}^{*}$ (when acting on continuous functions) associated to the eigenvalue 1. 
\item  There is a unique eigen-function, say $H_{m}$ for $\CL_{G,m}$  associated to the eigenvalue 1, uniqueness is up to the normalization 
$$\int H_{m}\,d\nu_{m}=1.$$
\item The function $H_{m}$ is equal to $\disp\lim_{n\to+\8}\frac1n\sum_{k=0}^{n-1}\CL_{G,m}^{k}(\BBone_{G})$, the limit is for the $||\ ||_{\8}$ norm. 
\item The measure $\mu_{m}:=H_{m}\otimes\nu_{m}$ is the unique equilibrium state for $(G,g_{m})$ and the potential $\Phi_{m}:=S_{r_{m}(*)}(\phi)(*)-r_{m}(*)P_{m}$. It is referred to the \emph{local} equilibrium state. 
\item The \emph{global} equilibrium state $\wh\mu_{m}$ satisfies $\dfrac{\wh\mu_{m}(\cdot\cap G)}{\wh\mu_{m}(G)}=\mu_{m}(\cdot)$, and 
$$\dfrac1{\wh\mu_{m}(G)}=\int_{G} r_{m}\,d\mu_{m}.$$
\end{enumerate}

The {\bf red cylinders} are eligible\footnote{This means that the words $\de_{l}\de^{n_{m}-2}$ are $D'$ eligible and the words $\al_{k}\al^{n_{m}-2}$ are $A'$-eligible} cylinders $[\al_{j}\de_{l}\de^{n_{m}-2}\ldots \de_{i}\al_{k}]$ or $[\de_{i}\al_{k}\al^{n_{m}-2}\ldots \al_{j}\de_{l}]$. Their image by the first return in the hole are the green cylinders. 
In the following we shall write $red \subset [\al_{j}\de_{l}]\to [\de_{i}\al_{k}]$ to say we consider a cylinder $[\al_{j}\de_{l}\de^{n_{m}-2}\ldots \de_{i}\al_{k}]$. Moreover, $[\de_{i}\al_{k}]$ will be called the ``target (green) cylinder'' for the red cylinder. 

Then, for any $x$ in say  $[\de_{i}\al_{k}]$, the $y$'s in $G$, such that $g_{m}(y)=x$ holds, are exactly the points $\al_{j}\de_{l}\de^{n_{m}-2}\ldots x$. In other words, any $red \subset [\al_{j}\de_{l}]\to [\de_{i}\al_{k}]$ contains exactly one pre-image of $x$ for $g_{m}$. Furthermore, all the points in that $red \subset [\al_{j}\de_{l}]\to [\de_{i}\al_{k}]$ have the same return time which is the length of the cylinder minus 1. By construction  the return time is at least $n_{m}$. 

Hence we have 

\begin{eqnarray}
\CL_{G,m}(f)(x)&=&\sum_{n=n_{m}}^{+\8}\sum_{y\in red \subset [\al_{j}\de_{l}]\to [\de_{i}\al_{k}],\ r(y)=n}e^{S_{n}(\phi)(y)-nP_{m}}f(y).\label{eq-lgmred}\\
&=& \sum_{n=n_{m}-2}^{+\8}\sum_{[\al_{j}\de_{l}]}\sum_{\de \text{ possible}, |\de|=n}e^{S_{n+2}(\phi)(\al_{j}\de_{l}\de x)-(n+2)P_{m}}f(\al_{j}\de_{l}\de x).\label{eq-lgmdel},
\end{eqnarray}
where $\de$ possible means $\de_{l}\de \de_{i}$ is eligible.  

\subsubsection{Some extra results }
\label{subsec-extraequiconti}
In view to prove that all the $H_{m}$ are bounded from above and equicontinuous, we need some extra results. 

\begin{lemma}
\label{lem-controlsamecyl}
There exists an universal constant $C$ such that for every $x$ and $x'$  in the same green-cylinder $[G]$ and for every $m$,
$$e^{-C}\le \dfrac{\CL_{G,m}^{k}(\BBone_{[G]}(x)}{\CL_{G,m}^{k}(\BBone_{[G]}(x')}\le e^{C}$$
\end{lemma}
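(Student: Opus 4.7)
The plan is to exploit the crucial structural fact that the inducing scheme has constant return time on each red cylinder and behaves like a full shift on the green cylinders at each step. Since $[G]$ has length $2$, points $x$ and $x'$ in $[G]$ agree on the first two coordinates; after prepending a common ``itinerary of red cylinders'' of total length $n$, the two preimages $y$ and $y'$ will agree on an initial block of length $n+2$, so only their tails can differ. This is precisely the setup in which a standard bounded-distortion argument applies.

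First I would set up the bijection between $k$-th preimages. By iterating the description in \eqref{eq-lgmred}, the points $y\in[G]$ with $g_m^{k}(y)=x$ are indexed by admissible concatenations $R_1R_2\cdots R_k$ of red cylinders, where $R_1\subset[G]$, $R_i$ ends in the green cylinder where $R_{i+1}$ starts, and $R_k$ ends in $[G]$ (the cylinder containing $x$). Writing $R_k=R_k'\cdot[G]$, the preimage is $y=R_1\cdots R_{k-1}R_k'\cdot x$, and the total return time $n:=r_m^{(k)}(y)=|R_1|+\cdots+|R_{k-1}|+|R_k'|$ depends only on the itinerary, not on $x$. Since $x,x'\in[G]$, replacing $x$ by $x'$ in the same itinerary produces the corresponding preimage $y'$ of $x'$, giving a bijection with $r_m^{(k)}(y')=n$. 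In particular $y$ and $y'$ lie in the same $(n+2)$-cylinder, hence $\BBone_{[G]}(y)=\BBone_{[G]}(y')$.

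Next I would run the bounded-distortion estimate on each matched pair. By construction $y_j=y'_j$ for $0\le j\le n+1$, so for each $0\le j\le n-1$,
$$d(\sigma^j y,\sigma^j y')\le 2^{-(n+2-j)},$$
and Lipschitz continuity of $\phi$ yields
$$|S_n(\phi)(y)-S_n(\phi)(y')|\le \mathrm{Lip}(\phi)\sum_{j=0}^{n-1}2^{-(n+2-j)}\le \tfrac14\mathrm{Lip}(\phi)=:C,$$
uniformly in $n$, $k$, and the choice of itinerary. Consequently the ratio of the matched summands in the defining series
$$\CL_{G,m}^{k}(\BBone_{[G]})(x)=\sum_{\text{itineraries}}e^{S_n(\phi)(y)-nP_m}\,\BBone_{[G]}(y)$$
lies in $[e^{-C},e^{C}]$, and summing term-by-term using the bijection gives the desired bound.

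There is no serious obstacle: the only potential subtlety is making sure the bijection and the matching of return times are honest — that is, that the admissibility of an itinerary for $x$ is the same as for $x'$. This is automatic because admissibility of a red cylinder $R_k$ landing in $[G]$ depends only on the cylinder $[G]$, not on the actual point in $[G]$. The constant $C$ depends only on $\mathrm{Lip}(\phi)$, hence is universal in $m$ and $k$ as required.
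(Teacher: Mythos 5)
Your proof is correct and uses essentially the same argument as the paper: match each $k$-th $g_m$-preimage $y$ of $x$ with the preimage $y'=($same prefix$)\cdot x'$ of $x'$, observe they share the full prefix (so the indicator agrees), and bound the difference of Birkhoff sums by the geometric series $\mathrm{Lip}(\phi)\sum_j 2^{-(n+2-j)}$ coming from Lipschitz continuity of $\phi$. The paper's proof is identical in content, merely stated a bit more tersely (it treats $k=1$ via red cylinders and then observes the same prefix-matching works for $k\ge2$); your explicit bookkeeping of itineraries and the term-by-term summation over the bijection is a fine way to say the same thing.
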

\begin{proof}
For simplicity we assume that $x$ and $x'$ both belong to $[\de_{i}\al_{k}]$. Then, any $red \subset [\al_{j}\de_{l}]\to [\de_{i}\al_{k}]$ contains one pre-image  for $x$, say $y$ and one pre-image for $x'$ say $y'$. Both points have the same return time,  say $r$.

Because $\phi$ is Lipschitz continuous we get:
$$\left|S_{r(y)}(\phi)(y)-S_{r(y')}(\phi)(y')\right|\le \frac{C_{\phi}}2\dfrac{1-\left(\frac12\right)^{r(y)}}{1-\frac12}d(x,x')\le C_{\phi}d(x,x')\le \dfrac{C_{\phi}}{2}.$$
This holds for any couple of pre-images of $x$ and $x'$ in the same red-cylinder, and \eqref{eq-lgmred} yields the result with $C=\dfrac{C_{\phi}}2$ for $k=1$.

For $k\ge 2$, if $x$ and $x'$ are in the same green-cylinder  and if $y=y_{0}\ldots y_{n}x$ is a preimage of order $k$ for $g_{m}$ of $x$, then $y':=y_{0}\ldots y_{n}x'$ is also a preimage of $x'$ of order $k$ for $g_{m}$. Again, one can associate pre-images  by pair and we still get 
$$|S_{n}(\phi)(y)-S_{n}(\phi)(y)|\le C_{\phi}d(x,x')\le\frac{C_{\phi}}2,$$
which yields the result. 
\end{proof}

Because $H_{m}=\lim_{\ninf}\frac1n\sum_{k=0}^{n-1}\CL_{G,m}^{k}(\BBone)$, Lemma \ref{lem-controlsamecyl} immediately yields 
\begin{lemma}
\label{lem-distoboundedHm}
For $x$ and $x'$ in the same green-cylinder, 
$$e^{-C_{\phi}/2}\le\dfrac{H_{m}(x)}{H_{m}(x')}\le e^{C_{\phi}/2}.$$
\end{lemma}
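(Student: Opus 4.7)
The plan is to obtain the inequality by pushing Lemma \ref{lem-controlsamecyl} through the Cesàro formula that defines $H_m$. First, I would fix $x, x'$ in the same green cylinder and recall that, for every $k \ge 1$, Lemma \ref{lem-controlsamecyl} provides
$$e^{-C_\phi/2}\,\CL_{G,m}^{k}(\BBone)(x') \le \CL_{G,m}^{k}(\BBone)(x) \le e^{C_\phi/2}\,\CL_{G,m}^{k}(\BBone)(x'),$$
where the bound for $\BBone=\BBone_G$ follows by summing the bound for each $\BBone_{[G]}$ over the finitely many green cylinders; the multiplicative factor $e^{\pm C_\phi/2}$ is uniform in the choice of cylinder, so the ratio bound is preserved by the sum. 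The case $k=0$ is trivial since $\BBone(x)=\BBone(x')=1$.

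Next, I would average from $k=0$ to $n-1$ and divide by $n$; because $e^{\pm C_\phi/2}$ does not depend on $k$, the multiplicative control is preserved by the Cesàro average. Invoking the uniform convergence
$$H_m \;=\; \lim_{n\to\infty}\frac{1}{n}\sum_{k=0}^{n-1}\CL_{G,m}^{k}(\BBone)$$
stated in property~4 of Section \ref{sec-theremoalaleplaide}, passage to the limit yields
$$e^{-C_\phi/2}\, H_m(x') \;\le\; H_m(x) \;\le\; e^{C_\phi/2}\, H_m(x').$$
Dividing by $H_m(x')>0$ (positivity follows from irreducibility of $\S_m$, see property~4 of the spectral decomposition recalled in Section \ref{sec-recallandmore}) gives the stated inequality.

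The only mildly non-trivial point is the passage from $\BBone_{[G]}$ to $\BBone_G$ itself, and this is immediate because the distortion constant is uniform over green cylinders. Otherwise the proof is routine: Lemma \ref{lem-distoboundedHm} is essentially the statement that the uniform bounded-distortion control provided by Lemma \ref{lem-controlsamecyl} survives Cesàro averaging, so I do not anticipate any serious obstacle.
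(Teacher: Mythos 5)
Your proof is correct and follows the paper's own route: the paper likewise deduces the lemma directly from Lemma \ref{lem-controlsamecyl} via the Cesàro formula $H_{m}=\lim_{n\to\infty}\frac1n\sum_{k=0}^{n-1}\CL_{G,m}^{k}(\BBone)$, only stated in one line. Your extra remarks (uniformity of the constant over the finitely many green cylinders so the ratio bound survives the sum defining $\BBone_G$, the trivial $k=0$ term, and positivity of $H_m$ before dividing) are exactly the details the paper leaves implicit.
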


We also get 
\begin{lemma}
\label{lem-controlhmmax}
$\sum_{[C]\ Green-cylinder}||H_{m|[C]}||_{\8}\nu_{m}([C])\le e^{C_{\phi}/2}$. 
\end{lemma}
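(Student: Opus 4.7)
The strategy is a one-line deduction from the normalization $\int H_m\,d\nu_m=1$ combined with the bounded-distortion estimate of Lemma \ref{lem-distoboundedHm}. The key observation is that, since $G$ is a disjoint union of the green cylinders, the normalization yields
\[
1 \;=\; \int H_m\,d\nu_m \;=\; \sum_{[C]\text{ green}}\int_{[C]} H_m\,d\nu_m,
\]
so it suffices to compare $\|H_{m|[C]}\|_\infty\,\nu_m([C])$ to $\int_{[C]}H_m\,d\nu_m$ cylinder by cylinder.

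The comparison step is where Lemma \ref{lem-distoboundedHm} does all the work. Fix a green cylinder $[C]$; for any $x\in [C]$ one has, from that lemma, $\|H_{m|[C]}\|_\infty \le e^{C_\phi/2} H_m(x)$. Integrating over $[C]$ against $\nu_m$ gives
\[
\|H_{m|[C]}\|_\infty\,\nu_m([C])\;\le\; e^{C_\phi/2}\int_{[C]} H_m\,d\nu_m.
\]

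Summing this inequality over all green cylinders and using the normalization above yields
\[
\sum_{[C]\text{ green}}\|H_{m|[C]}\|_\infty\,\nu_m([C])\;\le\; e^{C_\phi/2}\sum_{[C]\text{ green}}\int_{[C]} H_m\,d\nu_m \;=\; e^{C_\phi/2},
\]
which is the claim. There is no genuine obstacle here; the only thing to be careful about is that the distortion bound from Lemma \ref{lem-distoboundedHm} is uniform in $m$ (it depends only on $C_\phi$), so the resulting estimate is also uniform in $m$, which is precisely what will be needed later when we want the family $\{H_m\}$ to be controlled.
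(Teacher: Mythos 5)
Your proof is correct and follows essentially the same route as the paper: both arguments use the distortion bound of Lemma \ref{lem-distoboundedHm} on each green cylinder to compare $\|H_{m|[C]}\|_{\infty}\,\nu_{m}([C])$ with $\int_{[C]}H_{m}\,d\nu_{m}$, then sum and invoke the normalization $\int H_{m}\,d\nu_{m}=1$. The only cosmetic difference is that the paper evaluates at a point where $H_{m}$ attains its maximum on the compact cylinder, whereas you take the supremum directly from the distortion inequality; this is immaterial, and your remark that the bound is uniform in $m$ is exactly the point of the lemma.
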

\begin{proof}
If $[C]$ is a green-cylinder, then $H_{m|[C]}$ is continuous on the compact $[C]$ hence is bounded and reaches its bounds. 
Let $x_{[C]}$ be a point in $[C]$  where  $H_{m}$  is maximal. 
Then for any $x'\in [C]$, 
$$H_{m}(x')\ge e^{-C_{\phi}/2}H_{m}(x_{[C]})=e^{-C_{\phi}/2}||H_{m,|[C]}||_{\8}.$$
Then, 
$$1=\int H_{m}\,d\nu_{m}=\sum_{[C]\ Green-cylinder}\int_{[C]}H_{m}(x')d\nu_{m}(x')$$
yields 
$$1\ge \sum_{[C]\ Green-cylinder}e^{-C_{\phi}/2}||H_{m|[C]}||_{\8}\nu_{m}([C]).$$
\end{proof}

Concerning the control of the distortion we get a better result: 
\begin{proposition}
\label{prop-hmequilip}
The $H_{m}$ are all equicontinuous (as $m$ changes). 
\end{proposition}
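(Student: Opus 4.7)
The strategy is to sharpen the pairing argument used for Lemmas \ref{lem-controlsamecyl} and \ref{lem-distoboundedHm}. Those lemmas only produce a uniform multiplicative constant $e^{\pm C_\phi/2}$ for the ratio $H_m(x)/H_m(x')$ when $x,x'$ lie in the same green cylinder, because they discard the factor $d(x,x')$ that really appears in the distortion. For equicontinuity I need the ratio to tend to $1$ as $d(x,x')\to 0$ at a rate independent of $m$, and then to translate this multiplicative control into an additive one.

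I would first redo the pairing of pre-images of $x$ and $x'$ under $g_m^{k}$ using the finer estimate $d(\sigma^{j}z,\sigma^{j}z')\le 2^{-(R-j)}\,d(x,x')$ for $0\le j\le R-1$, valid because the paired pre-images $z$ and $z'$ share their first $R$ coordinates and then track $x$ and $x'$. Summing the geometric series and using Lipschitzness of $\phi$ yields $|S_{R}(\phi)(z)-S_{R}(\phi)(z')|\le C_\phi\,d(x,x')$, so for every $k\ge 1$, every $m$, and every pair $x,x'$ in the same green cylinder,
\begin{equation*}
e^{-C_\phi d(x,x')} \;\le\; \frac{\CL_{G,m}^{k}(\BBone_G)(x)}{\CL_{G,m}^{k}(\BBone_G)(x')} \;\le\; e^{C_\phi d(x,x')}.
\end{equation*}
Averaging in $k$ through the Cesaro formula $H_m=\lim_{n\to+\8}\tfrac1n\sum_{k=0}^{n-1}\CL_{G,m}^{k}(\BBone_G)$ and rearranging gives
\begin{equation*}
|H_m(x)-H_m(x')| \;\le\; \bigl(e^{C_\phi d(x,x')}-1\bigr)\,H_m(x') \;\le\; 2C_\phi\,d(x,x')\,||H_{m|[C]}||_{\8}
\end{equation*}
for $x,x'$ in the common green cylinder $[C]$.

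To upgrade this to genuine equicontinuity of $\{H_m\}_m$, it remains to produce a bound $||H_{m|[C]}||_{\8}\le M$ that is independent of $m$ and of $[C]$. From Lemma \ref{lem-controlhmmax} we already have $||H_{m|[C]}||_{\8}\,\nu_m([C])\le e^{C_\phi/2}$, so it suffices to prove a uniform lower bound $\nu_m([C])\ge c_{0}>0$. I would derive this from the conformality relation \eqref{eq1-conform} applied to $g_m$ together with the fact that the set of green cylinders is finite and independent of $m$: from any green cylinder, $g_m$ admits a positive-weight orbit reaching any other green cylinder, and conformality then propagates a lower bound from one fixed cylinder to all the others, provided the corresponding Gibbs weights are under control.

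The main obstacle is establishing exactly this uniform lower bound on $\nu_m([C])$. As $m$ grows, the minimum dwell times $n_m$ and $n'_m$ diverge, so the connecting orbits in $g_m$ have unboundedly large return times and their Gibbs weights $e^{S_{R}(\phi)-RP_m}$ could a priori collapse. The required uniformity has to come from the comparison between the global pressure $P_m$ and the surviving sub-pressures $P_{A'},P_{D'}$: the exponential growth (at rate $P_{A'}$ or $P_{D'}$) of the number of admissible connecting blocks in $\S_{A'}$ and $\S_{D'}$ should compensate the factor $e^{-RP_m}$, which is available because $P_m\to P$ while $\max(P_{A'},P_{D'})<P$ by construction of $A'$ and $D'$. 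Once this comparison is in place, $||H_m||_{\8}$ is bounded uniformly in $m$ and the refined inequality above yields an equi-Lipschitz estimate on each green cylinder, which in turn gives the claimed equicontinuity of $\{H_m\}$ on $G$.
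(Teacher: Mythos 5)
Your first half is exactly the paper's argument: the refined pairing estimate that keeps the factor $d(x,x')$ in the exponent, followed by the Ces\`aro passage to $H_{m}$, is precisely \eqref{equ1disto}, and the inequality $|H_{m}(x)-H_{m}(x')|\le (e^{C_{\phi}d(x,x')}-1)H_{m}(x')$ is the paper's intermediate step. The gap is in your second half, where you reduce equicontinuity to a bound $\|H_{m|[C]}\|_{\8}\le M$ uniform in $m$ and, via Lemma \ref{lem-controlhmmax}, to a uniform lower bound $\nu_{m}([C])\ge c_{0}>0$, which you acknowledge you do not prove. This is not a fillable technicality: that bound is false in general. Section \ref{sec-proof-main-1-2} shows that only the product $\l_{m}\l'_{m}$ converges (Proposition \ref{prop2-cvnumgenemain1}); when $\l'_{m}\to 0$ (equivalently $\l_{m}\to+\8$), \eqref{equ1-lmnum2} gives $\nu_{m}(\Om_{A})\to 0$, so every green cylinder in $\Om_{A}$ has vanishing $\nu_{m}$-mass, and correspondingly $\|H_{m|\Om_{A}}\|_{\8}\asymp\l_{m}\to+\8$; this is exactly why Proposition \ref{prop-cvHmcasgenegene} renormalizes by $1/\l_{m}$ on $\Om_{A}$ (see \eqref{equ1-majolmHmAgenegene}). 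Your pressure heuristic also points the wrong way: since $P_{A'}<P<P_{m}$ and return times are at least $n_{m}\to+\8$, the induced Gibbs weight of a connecting block decays like $e^{-n_{m}(P_{m}-P_{A'})}$; the count of admissible blocks is already what $P_{A'}$ measures, so there is no extra combinatorial growth left to compensate, and the resulting balance is precisely $\l_{m}$ (resp. $\l'_{m}$), which individually need not stay away from $0$ and $+\8$.

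The paper's proof takes a different exit from the multiplicative estimate: instead of a sup bound it replaces $H_{m}(x')$, via the distortion control on the cylinder, by the integral $\int_{[C]}H_{m}\,d\nu_{m}\le\int H_{m}\,d\nu_{m}=1$, so the normalization of $H_{m}$ against $\nu_{m}$ plays the role you wanted an $L^{\8}$ bound to play, and no lower bound on $\nu_{m}([C])$ is invoked. Strictly speaking, even that additive conclusion carries a hidden factor $1/\nu_{m}([C])$ (the comparison of $H_{m}(x)$ with $\int_{[C]}H_{m}\,d\nu_{m}$ uses the normalized measure on $[C]$), and in the regime $\l_{m}\to+\8$, where $H_{m|\Om_{A}}$ converges after renormalization to a nonconstant multiple of $\wt H_{A}$, only the relative form can survive. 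That relative form — your first display, with $H_{m}(x')$ or $\|H_{m}\|_{\8}$ on the right — is what the later sections actually use (locating a ball of $m$-independent radius on which $H_{m}$ is comparable to its maximum, Lipschitz bounds for $\frac1{\l_{m}}H_{m|\Om_{A}}$ and $H_{m|\Om_{D}}$). So the detour through a uniform sup bound is both unattainable and unnecessary; you should stop at the relative estimate rather than try to prove $\nu_{m}([C])\ge c_{0}$.
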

\begin{proof}
The computation done in the proof of Lemma \ref{lem-controlsamecyl} actually gives a finer result : 
For $x$ and $x'$ in the same green-cylinder, for $k$ integer, 
\begin{equation}
\label{equ1disto}
e^{-\frac{C_{\phi}}2d(x,x')}\le \dfrac{\CL_{G,m}^{k}(\BBone_{G}(x)}{\CL_{G,m}^{k}(\BBone_{G}(x')}\le e^{\frac{C_{\phi}}2d(x,x')}.
\end{equation}
This yields 
\begin{equation}
\label{equ1disto}
e^{-\frac{C_{\phi}}2d(x,x')}\le \frac{H_{m}(x)}{H_{m}(x')}\le e^{\frac{C_{\phi}}2d(x,x')}.
\end{equation}
Hence we get  
\begin{eqnarray*}
H_{m}(x)-H_{m}(x')&=& H_{m}(x)(e^{\pm\frac{C_{\phi}}2d(x,x')}-1)\\
&=& (e^{\pm\frac{C_{\phi}}2d(x,x')}-1)e^{\pm \frac{C_{\phi}}2}\int_{[C]}H_{m}d\nu_{m}.
\end{eqnarray*}
This yields
\begin{equation}
\label{eq-hmequiconti}
|H_{m}(x)-H_{m}(x')|\le  (e^{\frac{C_{\phi}}2d(x,x')}-1)e^{ \frac{C_{\phi}}2},
\end{equation}
since $\disp 0\le \int_{[C]}H_{m}d\nu_{m}\le 1$. 

\end{proof}

Finally we finish this section with a result on the return-times: 

\begin{proposition}
\label{prop-returnmeas}
Let $r_{DA,m}$ (resp. $r_{AD,m}$) be the return time function in the hole restricted  on the green-cylinders in $\Om_{D}$ (resp. $\Om_{A}$). 
Then, 
$$\wh\mu_{m}(\Om_{A})=\dfrac{\int r_{DA,m}d\mu_{m}}{\int r_{DA,m}d\mu_{m}+\int r_{AD,m}d\mu_{m}},\text{ and }\wh\mu_{m}(\Om_{D})=\dfrac{\int r_{AD,m}d\mu_{m}}{\int r_{DA,m}d\mu_{m}+\int r_{AD,m}d\mu_{m}}.$$ 
\end{proposition}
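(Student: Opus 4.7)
The plan is to combine the Kac-type inducing identity recalled in Subsection~\ref{sec-theremoalaleplaide} with the combinatorial structure of orbits between consecutive visits to the green cylinders. Heuristically, $\wh\mu_m(\Om_A)$ is the asymptotic fraction of time spent in $\Om_A$, and a full ``cycle'' $G\cap\Om_D\to G\cap\Om_A\to G\cap\Om_D$ spends an amount of time $r_{DA,m}$ in $\Om_A$ and $r_{AD,m}$ in $\Om_D$; dividing yields the claimed ratio.

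Concretely, from the two relations $\wh\mu_m(\cdot\cap G)/\wh\mu_m(G)=\mu_m(\cdot)$ and $1/\wh\mu_m(G)=\int_G r_m\,d\mu_m$ stated at the end of Subsection~\ref{sec-theremoalaleplaide}, I first derive the standard Kac identity
$$\wh\mu_m(B)=\wh\mu_m(G)\int_{G}\sum_{k=0}^{r_m(y)-1}\BBone_B(T^k y)\,d\mu_m(y), \qquad B\subset\S_m \text{ measurable},$$
which just expresses that the orbit pieces $\{T^k y:0\le k<r_m(y)\}$ tile $\S_m$ modulo a set of $\wh\mu_m$-measure zero. I apply this with $B=\Om_A$ and split the integral along $G=(G\cap\Om_D)\sqcup(G\cap\Om_A)$.

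Then I do the bookkeeping of where the intermediate iterates lie. For $y\in G\cap\Om_D$ (so $y\in [\de_i\al_k]$ for some $i,k$), the definition of $\S_m$ forces $g_m(y)\in[\al_j\de_l]\subset G\cap\Om_A$ and compels all intermediate iterates $Ty,\dots,T^{r_m(y)-1}y$ to begin with an $\al$-digit. Hence $\sum_{k=0}^{r_m(y)-1}\BBone_{\Om_A}(T^k y)=r_m(y)-1$ on $G\cap\Om_D$, while the symmetric argument gives value $1$ on $G\cap\Om_A$. Since $g_m$ bijectively (mod zero) exchanges $G\cap\Om_D$ and $G\cap\Om_A$ and $\mu_m$ is $g_m$-invariant, $\mu_m(G\cap\Om_A)=\mu_m(G\cap\Om_D)$. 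Plugging this into the Kac identity,
$$\wh\mu_m(\Om_A)=\wh\mu_m(G)\left[\int_{G\cap\Om_D}(r_m-1)\,d\mu_m+\mu_m(G\cap\Om_A)\right]=\wh\mu_m(G)\int_{G\cap\Om_D}r_m\,d\mu_m=\wh\mu_m(G)\int r_{DA,m}\,d\mu_m.$$
Finally $1/\wh\mu_m(G)=\int_G r_m\,d\mu_m=\int r_{DA,m}\,d\mu_m+\int r_{AD,m}\,d\mu_m$ delivers the claimed formula for $\wh\mu_m(\Om_A)$; the expression for $\wh\mu_m(\Om_D)$ follows either by the same argument or from $\wh\mu_m(\Om_A)+\wh\mu_m(\Om_D)=1$.

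The only mildly subtle step is the one where I check that no intermediate iterate of $y\in G\cap\Om_D$ accidentally hits $G$ or crosses back to $\Om_D$; this is precisely what the definition of the green cylinders as length-$2$ transition cylinders, together with the list of forbidden words in $\S_m$, was designed to guarantee. Everything else is routine Kac-type ergodic bookkeeping and invariance.
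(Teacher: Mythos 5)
Your proof is correct, and it rests on the same underlying idea as the paper (a Kac-type first-return decomposition leading to $\wh\mu_{m}(\Om_{A})=\wh\mu_{m}(G)\int_{G\cap\Om_{D}}r_{m}\,d\mu_{m}$), but the implementation is genuinely different. The paper does not invoke the abstract tower identity: it passes to the natural extension $\S_{m}^{*}$, where $\s$ is invertible, and covers $\Pi^{-1}(\Om_{A})$ by pairwise disjoint shifted copies of each red cylinder, all of equal $\mu_{m}^{*}$-measure; that natural-extension covering is precisely the rigorous form of your one-line justification that ``the orbit pieces tile $\S_{m}$ mod zero'', which for a non-invertible shift needs either this step or the standard Kakutani/Abramov tower argument (ergodicity of the Gibbs measure $\wh\mu_{m}$ plus items 5--6 of Subsection \ref{sec-theremoalaleplaide} do give it, so your use of it is legitimate, just stated informally). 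The bookkeeping also differs: you count $r_{m}-1$ visits to $\Om_{A}$ per block starting in $G\cap\Om_{D}$ and one visit per block starting in $G\cap\Om_{A}$, and then balance the two boundary terms via $\mu_{m}(G\cap\Om_{A})=\mu_{m}(G\cap\Om_{D})$; note that $g_{m}$ is many-to-one, so it is not a bijection as you wrote, but your conclusion only needs $g_{m}$-invariance of $\mu_{m}$ together with $g_{m}(G\cap\Om_{D})\subset G\cap\Om_{A}$ and $g_{m}(G\cap\Om_{A})\subset G\cap\Om_{D}$, so the equality stands. The paper instead groups the arrival green cylinder with the preceding excursion, attributing all $r_{DA,m}$ shifted cylinders of a red cylinder from the $\Om_{D}$-hole to $\Om_{A}$, and therefore never needs that symmetry; both accountings yield $\wh\mu_{m}(\Om_{A})=\int_{G\cap\Om_{D}}r_{DA,m}\,d\wh\mu_{m}$, after which your normalization by $1/\wh\mu_{m}(G)=\int_{G}r_{m}\,d\mu_{m}$ and the paper's use of $\wh\mu_{m}(\Om_{A})+\wh\mu_{m}(\Om_{D})=1$ are equivalent ways to finish. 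What your route buys is independence from the natural extension; what the paper's buys is a self-contained justification of the tiling that your version takes as known.
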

\begin{proof}
We consider the natural extension $\S^{*}_{m}$ of $\S_{m}$. The canonical projection onto $\S_{m}$ is $\Pi$ and $\mu^{*}_{m}$ is the unique invariant probability such that $\Pi(\mu^{*}_{m})=\wh\mu_{m}$. Points in $\S^{*}_{m}$ are bi-infinite sequences $(x_{n})_{n\in \Z}$
 such that for any $k\in \Z$, $(x_{n})_{n\ge k}$ is in $\S_{m}$. 
 
 For $\ul x$ in $\S^{*}_{m}$, the  cylinder $C_{n}^{m}(\ul x)$ is the set of points $\ul y$ such that $y_{-n}=x_{-n},\ldots, y_{0}=x_{0},\ldots , y_{m}=x_{m}$.

 Note that $\wh\mu_{m}$ is a Gibbs measure, hence it has full support. This also holds for $\mu_{m}^{*}$ and then  $\mu_{m}^{*}$-almost every point returns infinitely time in the past and in the future in the hole. 

We consider a generic point, say $\ul x$ in $\Pi^{-1}(\Om_{A})$, with first  entrance-time in the hole for the forward orbit  $m$, and for the backward orbit,  $n$. 
Since $\s$ is one-to-one, all the  cylinders $C_{n+k}^{m+1-k}(\s^{k}(\ul x))$, $-n\le k\le m$ have the same $\mu_{m}^{*}$-measure. 
This measure is $\mu_{m}^{*}(C_{0}^{n+m+1}(\s^{-n}(\ul x)))=\wh\mu_{m}([x_{-n}\ldots x_{0}\ldots x_{m+1}])$, with $x_{-n}x_{-n+1}=\de_{i}\al_{k}$ and $x_{m}x_{m+1}=\al_{i}\de_{l}$. Therefore, in $\S_{m}$, $[x_{-n}\ldots x_{0}\ldots x_{m+1}]$ is a red-cylinder with $red\subset[\de_{i}\al_{k}]\to[\al_{i}\de_{l}]$. 

Furthermore, all the cylinders $C_{n+k}^{m+1-k}(\s^{k}(\ul x))$, $-n\le k\le m$ are disjoints, and the $n+m+1$ cylinders $C_{n+k}^{m+1-k}(\s^{k}(\ul x))$, with $-n+1\le k\le m$ are in $\Pi^{-1}(\Om_{A})$. We emphasize that for the considered red-cylinder, $n+m+1$ is the return time in the hole.

Doing the same work for all the red-cylinders in the hole in $\Om_{D}$ (with target in the hole in $\Om_{A}$), 
we get a cover of $\Pi^{-1}(\Om_{A})$, and then 
$$\mu_{m}^{*}(\Pi^{-1}(\Om_{A}))=\int_{Hole\subset \Om_{D}} r_{DA,m}\,d\wh\mu_{m}.$$
This yields $\wh\mu_{m}(\Om_{A})=\int_{Hole\subset \Om_{D}} r_{DA,m}\,d\wh\mu_{m}$. doing the same work exchanging the roles of $A$ and $D$ we get 
$$\wh\mu_{m}(\Om_{D})=\int_{Hole\subset \Om_{A}} r_{AD,m}\,d\wh\mu_{m}.$$
As $\wh\mu_{m}(\Om_{A})+\wh\mu_{m}(\Om_{D})=1$, we get the result. 
\begin{figure}[htbp]
\begin{center}
\includegraphics[scale=0.5]{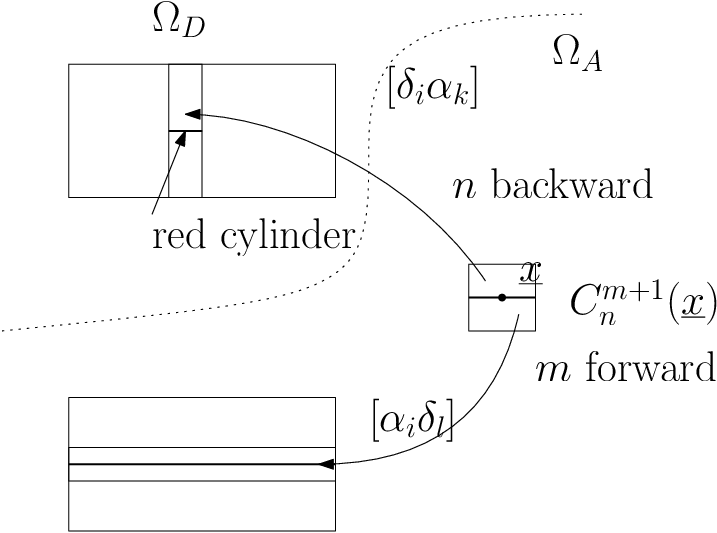}
\caption{{\bf Partition of $\Om_{A}$}}
\label{fig-partition-return}
\end{center}
\end{figure}

\end{proof}

\section{Proof of Theorem \ref{th-main1-2}}\label{sec-proof-main-1-2}

\subsection{Convergence for $\nu_{m}$}

\subsubsection{Loops, strings of transfer operators}
We will do computations that involve the returns in the holes in $\Om_{A}$ or $\Om_{D}$. The goal is to get estimates on $\nu_{m}$ and $H_{m}$. 
The spirit of the computation is simple, see Figure \ref{fig-trajectoCLA}, despite the writing is a littler bit heavy. 

\begin{figure}[htbp]
\begin{center}
\includegraphics[scale=0.5]{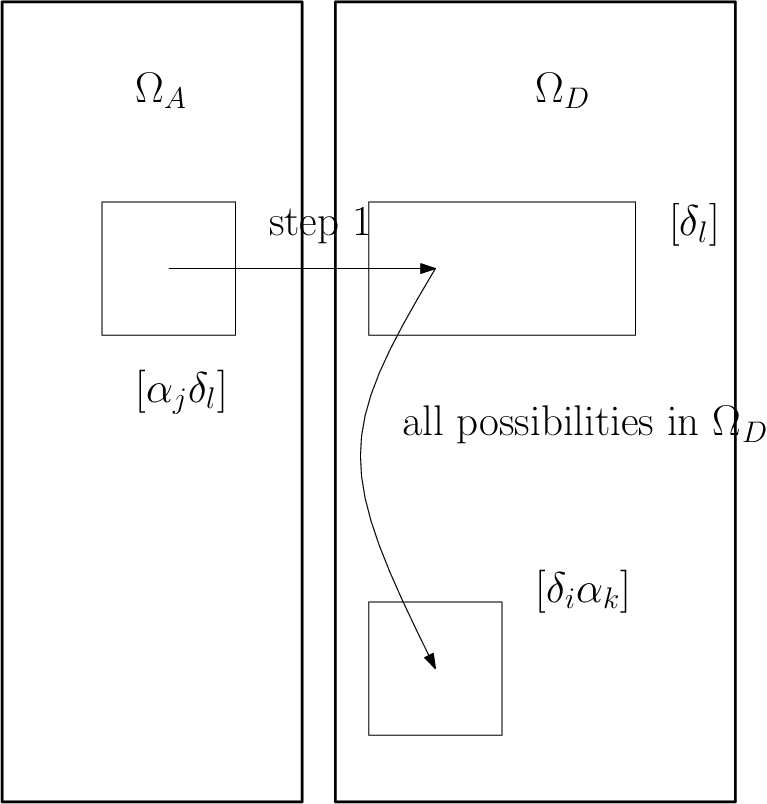}
\caption{{\bf Orbits leaving the hole in $\Om_{A}$ to return in the hole in $\Om_{D}$}}
\label{fig-trajectoCLA}
\end{center}
\end{figure}

One round runs as follows: an orbit leaves the hole in $\Om_{A}$ and arrives in $\Om_{D}$. Then is stays here for a time larger than $n'_{m}$, to eventually arrive in some green-cylinder $[\de_{i}\al_{k}]$. Then, starts the second round. 

\medskip
Estimates are obtained by estimating the transfer operator for these kind of orbits, that means the induced transfer operator for the first or the second return in the hole. 

The point is that, in spirit, these orbits see two special transitions (when they enter in the hole), and otherwise only see the dynamics in $\S_{D'}$, $\S_{D}$ or $\S_{A'}$ and $\S_{A}$.

We are thus led to estimate quantities of the form 
$$\CL^{r}_{A}\circ \CL^{n_{m}-1}_{A'}\circ T_{D\to A}\circ \CL_{D}^{n}\circ \CL_{D'}^{n'_{m}-1}\circ T_{A\to D},$$
where $T_{.\to.}$ are the transfer operators for the transitions (that is in the hole). 

In such estimates, we apply formulas ,\eqref {equ1-cla-extend} ,\eqref {equ1-claprime-extend} \eqref {equ1-cldprime-extend}  and \eqref {equ1-cld-extend} to compute the dominating terms and then sum over all the possibilities ($n$ and $r$).

\medskip
In the following, $P_{A'}$ and $P_{D'}$ respectively denote the pressure for $\S_{A'}$ and $\S_{D'}$ for the potential $\phi$. They are the $\log$ of the spectral radius of the transfer operators $\CL_{A'}$ and $\CL_{D'}$.
Because $A'<A$ and $D'>D$, uniqueness of the equilibrium state for irreducible subshifts  yields 

$$\boxed{P_{A'}<P} \text{ and }\boxed{P_{D'}<P}.$$

%
%
%

\subsubsection{Computations for $\nu_{m}$ with the first return in the hole}

In this first computation we consider the first return in the hole. In the following, $f$ is a Lipschitz-continuous function defined in the hole in $\Om_{A}$. The key point is to use that $\nu_{m}$ is a conformal measure (see Equation \eqref{eq1-conform}). This  makes the transfer operator appearing. 

\begin{align}
\int f\BBone_{[\al_{j}\de_{l}]}d\nu_{m}&= \sum_{[\de_{i}\al_{k}]}\sum_{n}\sum_{\de}\int f\BBone_{[\al_{j}\de_{l}\de^{n'_{m}-2}\de^{n}\de_{i}\al_{k}]}d\nu_{m}\nonumber\\
\intertext{ we use conformality}
&=\sum_{[\de_{i}\al_{k}]}\sum_{n}\sum_{\de}e^{-(n'_{m}+n)P_{m}}\int f(\al_{j}\de_{l}\de^{n'_{m}-2}\de^{n}*)e^{S_{n'_{m}+n}(\phi)(\al_{j}\de_{l}\de^{n'_{m}-2}\de^{n}*)}\BBone_{[\de_{i}\al_{k}]}(*)\,d\nu_{m}(*)\nonumber\\
\intertext{ we make $\CL_{D}^{n}$ appear}
&= \sum_{[\de_{i}\al_{k}]}\sum_{\de}e^{-(n'_{m}+n)P_{m}}\int \sum_{n} \CL_{D}^{n}\left(f(\al_{j}\de_{l}\de^{n'_{m}-2}*)e^{S_{n'_{m}}(\phi)(\al_{j}\de_{l}\de^{n'_{m}-2}*)}\right)\BBone_{[\de_{i}\al_{k}]}(*)\,d\nu_{m}(*)\nonumber\\
\intertext{we use \eqref{equ1-cla-extend}}
&= \sum_{[\de_{i}\al_{k}]}\sum_{\de}e^{-(n'_{m}+n)P_{m}}\int \sum_{n}e^{nP}\left(\left[
\int f(\al_{j}\de_{l}\de^{n'_{m}-2}*)e^{S_{n'_{m}}(\phi)(\al_{j}\de_{l}\de^{n'_{m}-2}*)}d\nu_{D}\right]\wt H_{D}\BBone_{[\de_{i}\al_{k}]}\right.\nonumber\\
&\left. +e^{-n\rho}\Psi_{D}^{n}\left(f(\al_{j}\de_{l}\de^{n'_{m}-2}*)e^{S_{n'_{m}}(\phi)(\al_{j}\de_{l}\de^{n'_{m}-2}*)}\right)\BBone_{[\de_{i}\al_{k}]}
\right)d\nu_{m}\label{toto1-bisrepetita}
\end{align}
Now, we focus on $\disp \sum_{\de}f(\al_{j}\de_{l}\de^{n'_{m}-2}*)e^{S_{n'_{m}}(\phi)(\al_{j}\de_{l}\de^{n'_{m}-2}*)}$ (that is evaluated at points in $\Om_{D}$). 

\begin{align*}
 \disp \sum_{\de}f(\al_{j}\de_{l}\de^{n'_{m}-2}*)e^{S_{n'_{m}}(\phi)(\al_{j}\de_{l}\de^{n'_{m}-2}*)}&= \CL_{D'}^{n'_{m}-1}\left(f(\al_{j}*)e^{\phi(\al_{j}*)}\right)\\
 \intertext{ we use \eqref{equ1-claprime-extend}}
 &\hskip -4cm= e^{(n'_{m}-1)P_{D'}}\left(\left[\int f(\al_{j}*)e^{\phi(\al_{j}*)}d\nu_{D'}\right]\wt H_{D'}\right.
 \left.+e^{-(n'_{m}-1)\rho}
\Psi^{n_{m}-1}_{D'}(f(\al_{j}*)e^{\phi(\al_{j}*)})\right).\\
 \end{align*}
If we re-inject this into \ref{toto1-bisrepetita} we find that $\int f\BBone_{[\al_{j}\de_{l}]}d\nu_{m}$ is equal to the sum of the next 4 summands: 
$$\sum_{[\de_{i}\al_{k}]}\sum_{n}e^{n(P-P_{m})}e^{-P_{m}}e^{(n'_{m}-1)(P_{D'}-P_{m})}\left[\int f(\al_{j}*)e^{\phi(\al_{j}*)}d\nu_{D'}\right]\left[\int \wt H_{D'}d\nu_{D}\right]\left[\int \wt H_{D}\BBone_{[\de_{i}\al_{k}]}d\nu_{m}\right]$$

$$\sum_{[\de_{i}\al_{k}]}\sum_{n}e^{n(P-P_{m})}e^{-P_{m}}e^{(n'_{m}-1)(P_{D'}-P_{m}-\rho)}\left[\int \Psi^{n_{m}-1}_{D'}(f(\al_{j}*)e^{\phi(\al_{j}*)})d\nu_{D}\right]\left[\int \wt H_{D}\BBone_{[\de_{i}\al_{k}]}d\nu_{m}\right]$$

$$\sum_{[\de_{i}\al_{k}]}\sum_{n}e^{n(P-P_{m}-\rho)}e^{-P_{m}}e^{(n'_{m}-1)(P_{D'}-P_{m})}\left[\int f(\al_{j}*)e^{\phi(\al_{j}*)}d\nu_{D'}\right]\left[\int \wt \Psi^{n}_{D}(H_{D'})\BBone_{[\de_{i}\al_{k}]}d\nu_{m}\right]$$

$$\sum_{[\de_{i}\al_{k}]}\sum_{n}e^{n(P-P_{m}-\rho)}e^{-P_{m}}e^{(n'_{m}-1)(P_{D'}-P_{m}-\rho)}\left[\int \wt \Psi^{n}_{D}(\Psi^{n_{m}-1}_{D'}(f(\al_{j}*)e^{\phi(\al_{j}*)}))\BBone_{[\de_{i}\al_{k}]}d\nu_{m}\right]$$

We remind that $\wh H_{D}$ and $\wh H_{D'}$ are bounded from below away from 0 and from above. Furhtermore, $\Psi_{D}$ and $\Psi_{D'}$ have spectral radius 1. Then, as $m$ goes to $+\8$, the first summand behaves like $\asymp \dfrac{e^{-P}e^{n'_{m}(P_{D'}-P_{m})}}{P_{m}-P}$. The second summand behaves like $\asymp\dfrac{e^{-P}e^{n'_{m}(P_{D'}-P_{m}-\rho)}}{P_{m}-P}$. The third summand behaves like $\asymp e^{n'_{m}(P_{D'}-P_{m})}$ and the fourth summand goes to zero exponentially faster that $e^{n'_{m}(P_{D'}-P)}$. This shows that the dominating term is the first summand. 

We remind that the union of the $[\de_{i}\al_{k}]$ is the set of green cylinders in $\Om_{D}$. Similarly the union of the $[\al_{j}\de_{l}]$ is the set of the green cylinders in $\Om_{A}$. The measure $\nu_{m}$ has its support in these green cylinders.

Hence, we get 
\begin{equation}
\label{equ1-num1loopnew}
\int_{\Om_{A}} fd\nu_{m}= \dfrac{e^{-P}e^{(n'_{m}-1)(P_{D'}-P_{m})}}{P_{m}-P}\left[\sum_{[\al_{j}\de_{l}]}\int f(\al_{j}*)e^{\phi(\al_{j}*)}d\nu_{D'}\right]\left[\int \wt H_{D'}d\nu_{D}\right]\left[\int_{\Om_{D}} \wt H_{D}d\nu_{m}\right](1+o(1))
\end{equation}

\subsubsection{Computation for $\nu_{m}$ with the second return in the hole}

The same computation exchanging the roles of $A$ and $D$, yields for $\wt f:\Om_{D}\to\R$ Lipschitz-continuous:
$$\int_{\Om_{D}} \wt fd\nu_{m}= \dfrac{e^{-P}e^{(n_{m}-1)(P_{A'}-P_{m})}}{P_{m}-P}\left[\sum_{[\de_{i}\al_{k}]}\int \wt f(\de_{i}*)e^{\phi(\de_{i}*)}d\nu_{A'}\right]\left[\int \wt H_{A'}d\nu_{A}\right]\left[\int_{\Om_{A}} \wt H_{A}d\nu_{m}\right](1+o(1))$$
If we replace $\wt f$ by $\wt H_{D}$ and then replace in \eqref{equ1-num1loopnew} we get 

\begin{align}
\int_{\Om_{A}} fd\nu_{m}&= \dfrac{e^{-2P}e^{(n'_{m}-1)(P_{D'}-P_{m})}e^{(n_{m}-1)(P_{A'}-P_{m})}}{(P_{m}-P)^{2}}\left[\sum_{[\al_{j}\de_{l}]}\int f(\al_{j}*)e^{\phi(\al_{j}*)}d\nu_{D'}\right]\left[\int \wt H_{D'}d\nu_{D}\right]\nonumber\\
& \left[\sum_{[\de_{i}\al_{k}]}\int \wt H_{D}(\de_{i}*)e^{\phi(\de_{i}*)}d\nu_{A'}\right]\left[\int \wt H_{A'}d\nu_{A}\right]\left[\int_{\Om_{A}} \wt H_{A}d\nu_{m}\right](1+o(1))
\label{equ1-num2loopnew}
\end{align}

\begin{definition}
\label{def-mespreima2}
We denote by $\al^{*}\nu_{D'}$ (resp. $\de^{*}\nu_{A'}$) the measure defined on the  green cylinders in $\Om_{A}$ (resp. $\Om_{D}$) whose image by $\s$ is $\nu_{D'}$ (resp. $\nu_{A'}$).

We also set $\disp \l_{m}:=\dfrac{e^{(n_{m}-1)(P_{A'}-P_{m})}}{1- e^{-(P_{m}-P)}}$ and $\l'_{m}:=\dfrac{e^{(n'_{m}-1)(P_{D'}-P_{m})}}{1- e^{-(P_{m}-P)}}$.
\end{definition}
 Then 
\eqref{equ1-num2loopnew} is equivalent to 
\begin{equation}
\int_{\Om_{A}} fd\nu_{m}= e^{-2P}\l_{m}\l'_{m}\left[\int fe^{\phi}d\al^{*}\nu_{D'}\right]\left[\int \wt H_{D'}d\nu_{D}\right] \left[\int \wt H_{D}e^{\phi}d\de^{*}\nu_{A'}\right]\left[\int \wt H_{A'}d\nu_{A}\right]\left[\int_{\Om_{A}} \wt H_{A}d\nu_{m}\right](1+o(1))
\label{equ1-num3loopnew}
\end{equation}

\subsubsection{Limits for $\nu_{m}$}

\begin{proposition}
\label{prop2-cvnumgenemain1}
The quantity $\L_{m}:=\l_{m}\l'_{m}$ goes to 
$$\L:=\dfrac{e^{2P}}{\left[\int \wt H_{A}e^{\phi}d\al^{*}\nu_{D'}\right]\left[\int \wt H_{D'}d\nu_{D}\right] \left[\int \wt H_{D}e^{\phi}d\de^{*}\nu_{A'}\right]\left[\int \wt H_{A'}d\nu_{A}\right]}$$ as $m$ goes to $+\8$. 

The  probability measure $\nu_{m\,|A}$ (resp. $\nu_{m\,|D}$) converges for the weak* topology to the probability measure  proportional to $e^{\phi}.\al^{*}\nu_{D'}$ (resp. $e^{\phi}.\al^{*}\nu_{A'}$).  

Furthermore 
\begin{equation}
\label{equ3-rapportnum2}
\dfrac1{\l'_{m}}\dfrac{\nu_{m}(\Om_{A})}{\nu_{m}(\Om_{D})}\to_{m\to+\8}e^{-P}\dfrac{\int_{\Om_{A}}e^{\phi}\,d\al^{*}\nu_{D'}}{\int_{\Om_{D}}e^{\phi}\,d\de^{*}\nu_{A'}}\int \wt H_{D'}d\nu_{D}\int_{\Om_{D}} \wt H_{D}e^{\phi}\,d\de^{*}\nu_{A'}.
\end{equation}

\end{proposition}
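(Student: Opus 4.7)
The plan is to extract all three claims directly from the asymptotic expansions \eqref{equ1-num1loopnew} and \eqref{equ1-num3loopnew} by taking ratios of integrals against well-chosen Lipschitz test functions, so that the unknown $m$-dependent prefactors cancel and each limit of interest is isolated.

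For the weak-$*$ convergence of $\nu_{m|A}$, I apply the two-loop formula \eqref{equ1-num3loopnew} first to $f = \BBone_{\Om_A}$ and then to an arbitrary Lipschitz test function $f$ on $\Om_A$. The dependence on $f$ appears only in the factor $\int f\, e^\phi \, d\al^*\nu_{D'}$; every other quantity in the expansion, including $\l_m\l'_m$ and $\int_{\Om_A}\wt H_A\, d\nu_m$, is $f$-independent. Dividing the two identities term by term yields
$$\frac{1}{\nu_m(\Om_A)}\int_{\Om_A} f \, d\nu_m = \frac{\int f\, e^\phi \, d\al^*\nu_{D'}}{\int e^\phi \, d\al^*\nu_{D'}}(1+o(1)),$$
which is exactly the announced weak-$*$ convergence. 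The statement for $\nu_{m|D}$ follows from the symmetric version of \eqref{equ1-num3loopnew} obtained by exchanging the roles of $A$ and $D$, of $n_m$ and $n'_m$, and of $\l_m$ and $\l'_m$.

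For the limit of $\L_m = \l_m\l'_m$, I apply \eqref{equ1-num3loopnew} a third time, now with $f = \wt H_A$. By Proposition \ref{prop-extension eigenfunc} the extended eigenfunction $\wt H_A$ is Lipschitz on $\Om_A$; it is moreover continuous and strictly positive there, so $\int_{\Om_A}\wt H_A\, d\nu_m > 0$. Dividing both sides by $\int_{\Om_A}\wt H_A\, d\nu_m$ makes this prefactor cancel with itself, leaving
$$1 = e^{-2P}\L_m \left[\int \wt H_A\, e^\phi \, d\al^*\nu_{D'}\right]\left[\int \wt H_{D'}\, d\nu_D\right]\left[\int \wt H_D\, e^\phi \, d\de^*\nu_{A'}\right]\left[\int \wt H_{A'}\, d\nu_A\right](1+o(1)),$$
from which the announced explicit value of $\L$ is read off.

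Finally, for the ratio formula, I use the one-loop expansion \eqref{equ1-num1loopnew} applied with $f = \BBone_{\Om_A}$, which gives
$$\nu_m(\Om_A) = e^{-P}\l'_m\left[\int_{\Om_A} e^\phi \, d\al^*\nu_{D'}\right]\left[\int \wt H_{D'} \, d\nu_D\right]\int_{\Om_D}\wt H_D \, d\nu_m\,(1+o(1)).$$
Dividing by $\l'_m\,\nu_m(\Om_D)$ and invoking the weak-$*$ convergence established in the first step, applied in $\Om_D$ to the Lipschitz test function $\wt H_D$, to compute
$$\frac{\int_{\Om_D}\wt H_D\, d\nu_m}{\nu_m(\Om_D)} \longrightarrow \frac{\int \wt H_D\, e^\phi \, d\de^*\nu_{A'}}{\int e^\phi \, d\de^*\nu_{A'}},$$
produces exactly \eqref{equ3-rapportnum2}. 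The only technical subtlety in the argument is the need to use the extended eigenfunctions $\wt H_A$ and $\wt H_D$ as test functions (rather than just $\BBone$) in order to eliminate the otherwise unknown factor $\int \wt H\, d\nu_m$; this is precisely what the extension mechanism of Proposition \ref{prop-extension eigenfunc} was set up to permit.
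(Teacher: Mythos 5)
Your proposal is correct and follows essentially the same route as the paper: take $f=\wt H_{A}$ in \eqref{equ1-num3loopnew} so that the factor $\int_{\Om_{A}}\wt H_{A}\,d\nu_{m}$ cancels and gives $\L$, divide \eqref{equ1-num3loopnew} by $\nu_{m}(\Om_{A})$ (your ratio of the identity at $f$ and at $\BBone_{\Om_{A}}$ is the same computation) to get the weak* limit of $\nu_{m|A}$ and, by symmetry, of $\nu_{m|D}$, and then take $f=1$ in \eqref{equ1-num1loopnew} and divide by $\l'_{m}\nu_{m}(\Om_{D})$ to obtain \eqref{equ3-rapportnum2}. Your only addition is to make explicit the step the paper leaves implicit, namely using the weak* convergence of $\nu_{m|D}$ to evaluate $\int_{\Om_{D}}\wt H_{D}\,d\nu_{m}/\nu_{m}(\Om_{D})$, which is a welcome clarification rather than a deviation.
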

\begin{proof}
If we replace $f$ by $\wt H_{A}$ in \eqref{equ1-num3loopnew} and do $m\to+\8$ we get 
$$\lim_{\minf}\l_{m}\l'_{m}=\dfrac{e^{2P}}{\left[\int \wt H_{A}e^{\phi}d\al^{*}\nu_{D'}\right]\left[\int \wt H_{D'}d\nu_{D}\right] \left[\int \wt H_{D}e^{\phi}d\de^{*}\nu_{A'}\right]\left[\int \wt H_{A'}d\nu_{A}\right]}.$$
Dividing in both side of \eqref{equ1-num3loopnew} by $\nu_{m}(\Om_{A})$, we see that the unique possible accumulation point for $\nu_{m|A}$ is the probability measure equivalent to $e^{\phi}. \al^{*}\nu_{D'}$. Result for $\nu_{m|D}$ is obtained by symmetry. 

Finally, if we do $f=1$ in \eqref{equ1-num1loopnew}, then divide by $\nu_{m}(\Om_{D})$ and make $m$ go to $+\8$, we get \eqref{equ3-rapportnum2}.
\end{proof}

\subsection{Convergences for $H_{m}$}
In this subsection, we prove that ``in spirit'' $H_{m}$ converges as $m$ goes to $+\8$.  Nevertheless, the convergence depends on how $\l'_{m}$ and $\l_{m}$ behave, in particular if they are bounded or not. 

We remind that $\l'_{m}\l_{m}$ does converge, hence, if one, say $\l'_{m}$, goes to 0, then $\l_{m}$ goes to $+\8$. 

In that case one needs to consider the correct normalization, $\dfrac1{\l_{m}}H_{m}$ but only on one part of the hole. 

The strategy is to prove that there exists a unique accumulation point for $H_{m}$ (with the adapted normalization).  

\subsubsection{A general computation for $H_{m}$}

We pick $x\in [\de_{i}\al_{k}]$. Then we get 
\begin{align}
H_{m}(x)&=\sum_{[\al_{j}\de_{l}]}\sum_{n=0}^{+\8}\sum_{\de^{n},\de^{n'_{m}-2}}e^{-(n+n'_{m})P_{m}}e^{S_{n+2}(\phi)(\al_{j}\de_{l}\de^{n'_{m}-2}\de^{n} x)}H_{m}(\al_{j}\de_{l}\de^{n'_{m}-2}\de^{n} x)\nonumber\\
&= \sum_{[\al_{j}\de_{l}]}\sum_{\de^{n'_{m}-2}}\sum_{n=0}^{+\8}e^{-(n+n'_{m})P_{m}}\CL_{D}^{n}(e^{\phi(\al_{j}\de_{l}\de^{n'_{m}-2}*)}H_{m}(\al_{j}\de_{l}\de^{n'_{m}-2}*))(x)\nonumber\\
&\hskip -4cm= \sum_{[\al_{j}\de_{l}]}\sum_{\de^{n'_{m}-2}}e^{-n'_{m}P_{m}}\sum_{n=0}^{+\8}e^{n(P-P_{m})}
\left(\left[\int e^{\phi(\al_{j}\de_{l}\de^{n'_{m}-2}*)}H_{m}(\al_{j}\de_{l}\de^{n'_{m}-2}*)d\nu_{D}\right]\wt H_{D}(x)+e^{-n\rho}\Psi_{D}(e^{\phi(\al_{j}\de_{l}\de^{n'_{m}-2}*)}H_{m}(\al_{j}\de_{l}\de^{n'_{m}-2}*))
\right)\nonumber\\
&= \sum_{[\al_{j}\de_{l}]}e^{-n'_{m}P_{m}}\sum_{n=0}^{+\8}e^{n(P-P_{m})}
\left(\left[\int \sum_{\de^{n'_{m}-2}}e^{\phi(\al_{j}\de_{l}\de^{n'_{m}-2}*)}H_{m}(\al_{j}\de_{l}\de^{n'_{m}-2}*)d\nu_{D}\right]\wt H_{D}(x)\right.\nonumber\\
&\left.+e^{-n\rho}\Psi_{D}(\sum_{\de^{n'_{m}-2}}e^{\phi(\al_{j}\de_{l}\de^{n'_{m}-2}*)}H_{m}(\al_{j}\de_{l}\de^{n'_{m}-2}*))
\right)\nonumber\\
&= \sum_{[\al_{j}\de_{l}]}e^{-n'_{m}P_{m}}\sum_{n=0}^{+\8}e^{n(P-P_{m})}
\left(\left[\int \CL_{D'}^{n'_{m}-1}\left(e^{\phi(\al_{j}*)}\BBone_{[\de_{l}]}(*)H_{m}(\al_{j}*)\right)d\nu_{D}\right]\wt H_{D}(x)\right.\nonumber\\
&\left.+e^{-n\rho}\Psi_{D}^{n}\left(\CL_{D'}^{n'_{m}-1}\left(e^{\phi(\al_{j}*)}\BBone_{[\de_{l}]}(*)H_{m}(\al_{j}*)\right)\right)
\right)\nonumber\\
& \hskip -2cm=e^{-P_{m}}\l'_{m}\wt H_{D}(x)\left[\int \left[\int_{\Om_{A}} e^{\phi}H_{m}d(\al^{*}\nu_{D'} )\right]\wt H_{D'}
+e^{-(n'_{m}-1)\rho}\Psi_{D'}^{n'_{m}-1}\left(\sum_{[\al_{j}\de_{l}]}
e^{\phi(\al_{j}*)}\BBone_{[\de_{l}]}(*)H_{m}(\al_{j}*)
\right)d\nu_{D}
\right]\nonumber\\
&+e^{-P_{m}}e^{(n'_{m}-1)(P_{D'}-P_{m})}
\sum_{n=0}^{+\8}e^{n(P-P_{m}-\rho)}
\Psi_{D}^{n}\left(
\left[\int_{\Om_{A}} e^{\phi}H_{m}d(\al^{*}\nu_{D'} )\right]\wt H_{D'}+e^{-(n'_{m}-1)\rho}\Psi_{D'}^{n'_{m}-1}(e^{\phi}H_{m})
\right)
.\label{eq1-hmgene2}
\end{align}
Again we get four summands. The third and the fourth respectively are in $o$  of the first and the second since $P-P_{m}$ goes to $0$ as $m$ goes to $+\8$ and we get a multiplicative quantity $\dfrac1{1-e^{P_{m}-P}}$ in the first and the second compared to the third and the fourth. 

Now, $\nu_{D'}$ gives positive weight to any open set in $D'$, hence $\al^{*}\nu_{D'}$ gives positive weight to any open set in the green cylinders in $\Om_{A}$. Equation \eqref{eq-hmequiconti} shows that all the $H_{m}$ are equicontinuous. We can thus fix some open ball with fixed radius (independent from $m$) such that on that ball $H_{m}$ is ``almost'' equal to its maximum (on $\Om_{A}$). All these balls have a bounded away from zero $\al^{*}\nu_{D'}$-measure as $m$ changes. This yields that  the ratio $\disp \dfrac{\int e^{\phi}H_{m}d(\al^{*}\nu_{D'})}{||H_{m}||_{\8}}$ is bounded away from zero. This yields that the second summand in \eqref{eq1-hmgene2} is in $o$ of the first one as $m$ goes to $+\8$ since $n'_{m}$ goes to $+\8$. 

Hence we get for every $x$ in any green cylinder in $\Om_{D}$:
\begin{equation}
H_{m}(x)=e^{-P_{m}}\l'_{m}\wt H_{D}(x)\left[\int\wt H_{D'}d\nu_{D}\right] \left[\int_{\Om_{A}} e^{\phi}H_{m}d(\al^{*}\nu_{D'} )\right](1+\eps(m)),
\label{eq1-hmgene3}
\end{equation}
with $\eps(m)\to_{\minf}0$.
Similarly, we get  for every $x$ in any green cylinder in $\Om_{A}$:
\begin{equation}
H_{m}(x)=e^{-P_{m}}\l_{m}\wt H_{A}(x)\left[\int\wt H_{A'}d\nu_{A}\right] \left[\int_{\Om_{D}} e^{\phi}H_{m}d(\de^{*}\nu_{A'} )\right](1+\eps(m)),
\label{eq1-hmgene4}
\end{equation}
with $\eps(m)\to_{\minf}0$.

\subsubsection{Convergence in the case $\l_{m}\to+\infty$}

\begin{proposition}
\label{prop-cvHmcasgenegene}
$H_{m|\Om_{D}}$ and $\dfrac{1}{\l_{m}}H_{m|\Om_{A}}$ respectively converge to $K_{D}\wt H_{D}$ and $K_{A}\wt H_{A}$, with $K_{D}=\dfrac{\int e^{\phi}d\de^{*}\nu_{A'}}{2\int \wt H_{D}e^{\phi}d\de^{*}\nu_{A'}}$ and $K_{A}=\dfrac{\int e^{\phi}d\de^{*}\nu_{A'}\int \wt H_{A'}d\nu_{A}}{2e^{P}}$.
\end{proposition}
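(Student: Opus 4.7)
My plan is to read the two fixed-point identities \eqref{eq1-hmgene3}--\eqref{eq1-hmgene4} as forcing $H_m$, up to $(1+\eps(m))$, to be a scalar multiple of $\wt H_{D}$ on $\Om_{D}$ and of $\wt H_{A}$ on $\Om_{A}$. To lighten notation I set
\[
I_A:=\int\wt H_A e^{\phi}d\al^{*}\nu_{D'},\quad I_D:=\int\wt H_D e^{\phi}d\de^{*}\nu_{A'},\quad J_A:=\int\wt H_{A'}d\nu_A,\quad J_D:=\int\wt H_{D'}d\nu_D,
\]
and $C_m^{(A)}:=\int_{\Om_A}e^{\phi}H_m\,d\al^{*}\nu_{D'}$, $C_m^{(D)}:=\int_{\Om_D}e^{\phi}H_m\,d\de^{*}\nu_{A'}$, so that \eqref{eq1-hmgene3}--\eqref{eq1-hmgene4} rewrite as $H_m|_{\Om_D}=e^{-P_m}\l'_m J_D C_m^{(A)}\wt H_D(1+\eps(m))$ and $\l_m^{-1}H_m|_{\Om_A}=e^{-P_m}J_A C_m^{(D)}\wt H_A(1+\eps(m))$.

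Next I will integrate the first identity against $e^{\phi}d\de^{*}\nu_{A'}$ and $\l_m$ times the second against $e^{\phi}d\al^{*}\nu_{D'}$, yielding the coupled algebraic system $C_m^{(D)}=e^{-P_m}\l'_m J_D I_D C_m^{(A)}(1+o(1))$, $C_m^{(A)}=e^{-P_m}\l_m J_A I_A C_m^{(D)}(1+o(1))$. Multiplying the two identities one recovers exactly the limit $\l_m\l'_m\to e^{2P}/(J_A J_D I_A I_D)$ of Proposition \ref{prop2-cvnumgenemain1}. Along any subsequence for which $C_m^{(D)}\to c_D$ the system then forces $\l'_m C_m^{(A)}\to e^P c_D/(J_D I_D)$, and the rewritten \eqref{eq1-hmgene3}--\eqref{eq1-hmgene4} give the announced form of the limits with $K_D=c_D/I_D$ and $K_A=e^{-P}J_A c_D$, in particular $K_D I_D=e^P K_A/J_A$. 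Existence of the subsequential limit $c_D$ and uniform convergence on each green cylinder will follow from Proposition \ref{prop-hmequilip} applied in the correct scaling (to $H_m$ on $\Om_D$, and to $\kappa_m:=\l_m^{-1}H_m$ on $\Om_A$) together with the a priori bound from Lemma \ref{lem-controlhmmax}.

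The last step is to fix $c_D$ via the normalization $\int H_m\,d\nu_m=1$, using the asymptotics of Proposition \ref{prop2-cvnumgenemain1}. Because $\l'_m\to 0$ one has $\nu_m(\Om_D)\to 1$, $\nu_m(\Om_A)/\l'_m$ converges to an explicit positive constant, and the renormalized restrictions of $\nu_m$ converge weakly to the probabilities proportional to $e^{\phi}\al^{*}\nu_{D'}$ on $\Om_A$ and to $e^{\phi}\de^{*}\nu_{A'}$ on $\Om_D$. Substituting in $\int H_m d\nu_m=\int_{\Om_D}H_m d\nu_m+\l_m\int_{\Om_A}\kappa_m d\nu_m$ and using the derived asymptotic $\l_m\nu_m(\Om_A)\to e^P\int e^{\phi}d\al^{*}\nu_{D'}/(J_A I_A\int e^{\phi}d\de^{*}\nu_{A'})$, the normalization becomes $1=K_D I_D/\int e^{\phi}d\de^{*}\nu_{A'}+e^P K_A/(J_A\int e^{\phi}d\de^{*}\nu_{A'})$. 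Since $K_D I_D=e^P K_A/J_A$, the two contributions coincide, so $K_A=J_A\int e^{\phi}d\de^{*}\nu_{A'}/(2e^P)$, whence $K_D=\int e^{\phi}d\de^{*}\nu_{A'}/(2I_D)$, as stated; uniqueness of the accumulation point then upgrades subsequential convergence to convergence of the full sequence.

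The main obstacle is the coupling between the $(1+\eps(m))$ errors in the fixed-point identities and the integrals $C_m^{(A)},C_m^{(D)}$ which are defined through $H_m$ itself: one has to argue that the algebraic closure together with the normalization rules out both escape to infinity and collapse to zero of $c_D$, and that pointwise convergence on each green cylinder can be upgraded to dominated convergence inside $C_m^{(A)}$, $C_m^{(D)}$ and the normalization integral. Lemma \ref{lem-controlhmmax} provides the necessary boundedness, while Proposition \ref{prop-hmequilip}, applied with no rescaling on $\Om_D$ and with division by $\l_m$ on $\Om_A$, provides the equicontinuity needed to close the argument.
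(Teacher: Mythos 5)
Your proposal is correct and follows essentially the same route as the paper's proof: it rests on the fixed-point identities \eqref{eq1-hmgene3}--\eqref{eq1-hmgene4}, the measure asymptotics of Proposition \ref{prop2-cvnumgenemain1} (in particular $\l_{m}\nu_{m}(\Om_{A})\to K\L$ and $\nu_{m}(\Om_{D})\to 1$), sup bounds plus the equicontinuity of Proposition \ref{prop-hmequilip} to extract convergent subsequences, the fixed ratio $K_{D}/K_{A}$, and the normalization $\int H_{m}\,d\nu_{m}=1$ to pin down the constants and conclude by uniqueness of the accumulation point. Your reformulation through the scalars $C_{m}^{(A)}$, $C_{m}^{(D)}$ is only a cosmetic repackaging of the paper's direct passage to limits of $H_{m|\Om_{D}}$ and $\l_{m}^{-1}H_{m|\Om_{A}}$, and the constants you obtain agree with those stated.
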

The rest of the subsection is devoted to the proof of the proposition.

We consider a subsequence such that $\l_{m_{k}}\to+\8$. For simplicity we shall always write $\l_{m}\to+\8$. As it is said above,  $\l'_{m}$ goes to 0. 

Furthermore \eqref{equ3-rapportnum2} yields 
$$\nu_{m}(\Om_{A})=\dfrac{K(1+o(1))\l'_{m}}{1+K(1+o(1))\l'_{m}},$$
with $\boxed{K=e^{-P}\dfrac{\int_{\Om_{A}}e^{\phi}\,d\al^{*}\nu_{D'}}{\int_{\Om_{D}}e^{\phi}\,d\de^{*}\nu_{A'}}\int\wt H_{D'}d\nu_{D}\int_{\Om_{D}} \wt H_{D}e^{\phi}\,d\de^{*}\nu_{A'}}$, and  then 

\begin{equation}
\label{equ1-lmnum2}
\l_{m}\nu_{m}(\Om_{A})=\dfrac{K(1+o(1))\L_{m}}{1+K(1+o(1))\l'_{m}}\to_{\minf}K\L.
\end{equation}
This means that $\nu_{m}(\Om_{A})$ goes to 0 ans then $\nu_{m}(\Om_{D})$ goes to 1. 

\bigskip

If $[C]$ is a green cylinder, \eqref{equ1disto} yields 
$$||H_{m|[C]}||_{\8}\nu_{m}([C])\le e^{\frac{C_{\phi}}{4}}\int_{[C]}H_{m}\,d\nu_{m}.$$

As it is said above, $\nu_{D'}$ gives positive weight to any open set in $D'$, thus $\al^{*}\nu_{D'}$ gives positive weight to any open set in the green cylinders in $\Om_{A}$. Then, Prop. \ref{prop2-cvnumgenemain1} shows that $\nu_{m|A}([C])$ is bounded away from zero as $m$ goes to $+\8$. 

Considering some green-cylinder in $\Om_{A}$ where $H_{m}$ is maximal, \eqref{equ1-lmnum2}  yields

\begin{equation}
\label{equ1-majolmHmAgenegene}
\dfrac1{\l_{m}}||H_{m|\Om_{A}}||_{\8}\le \dfrac{e^{\frac{C_{\phi}}{4}}}{\l_{m}\nu_{m}(\Om_{A})\nu_{m|A}([C])}\le \wt C,
\end{equation}
for some universal constant $\wt C$. 
This plus \eqref{eq-hmequiconti} yields that the family $\dfrac1\l_{m}H_{m|\Om_{A}}$ is bounded for the Lipschitz norm thus, we can pick some converging subsequence. 

On the other hand, doing the  same reasoning for $D$  we get 
\begin{equation}
\label{equ1-majolmHmDgenegene}
||H_{m|\Om_{D}}||_{\8}\le \dfrac{e^{\frac{C_{\phi}}{4}}}{\nu_{m}(\Om_{D})\nu_{m|D}([C])}\le C'
\end{equation}
for some universal constant $C'$.
 Again, \eqref{eq-hmequiconti} yields that the family $H_{m|\Om_{D}}$ is bounded for the Lipschitz norm thus we can pick some converging subsequence.

Now, \eqref{eq1-hmgene3} and \eqref{eq1-hmgene4} show that $\dfrac1{\l_{m}}H_{m|\Om_{A}}$ converges\footnote{up to the correct subsequence} to $K_{A}\wt H_{A}$ while $H_{m|\Om_{D}}$ converges to $K_{D}\wt H_{D}$, with 
\begin{equation}
\label{eq1-KAKDgenegene}
\dfrac{K_{D}}{K_{A}}=\dfrac{e^{P}}{\int \wt H_{A'}d\nu_{A}\int \wt H_{D}e^{\phi}d\de^{*}\nu_{A'}}
\end{equation}

The normalization condition yields 
\begin{align*}
 1&=\int H_{m}\,d\nu_{m}=\l_{m}\int_{\Om_{A}}\dfrac1{\l_{m}}H_{m}\,d\nu_{m}+\int_{\Om_{D}}H_{m}\,d\nu_{m}\\
 &= \l_{m}\nu_{m}(\Om_{A})\int_{\Om_{A}}\dfrac1{\l_{m}}H_{m}\,d\nu_{m|A}+\nu_{m}(\Om_{D})\int_{\Om_{D}}H_{m}\,d\nu_{m|D},\\
 \intertext{ and doing $m\to+\8$ we get }
 1&=K.\Lambda. K_{A}\dfrac{\int \wt H_{A}e^{\phi}d\al^{*}\nu_{D'}}{\int e^{\phi}d\al^{*}\nu_{D'}}+K_{D}\dfrac{\int \wt H_{D}e^{\phi}d\de^{*}\nu_{A'}}{\int e^{\phi}d\de^{*}\nu_{A'}}.
\end{align*}
Therefore, \eqref{eq1-KAKDgenegene} yields $\boxed{K_{A}=\dfrac{\int e^{\phi}d\de^{*}\nu_{A'}\int \wt H_{A'}d\nu_{A}}{2e^{P}}}$  and $\boxed{K_{D}=\dfrac{\int e^{\phi}d\de^{*}\nu_{A}}{2\int \wt H_{D}e^{\phi}d\de^{*}\nu_{A'}}}$.

This finally show that $H_{m|\Om_{D}}$ and $\dfrac{1}{\l_{m}}H_{m|\Om_{A}}$ actually converge since they have only one possible accumulation point. 

\subsubsection{Convergence in the case $\l_{m}\nrightarrow0,+\infty$}
Now, we assume that $\l_{m}$ (and thus this also holds for $\l'_{m}$) is bounded away from 0 and from above. 

We consider a subsequence such that $\l_{m}$ goes to $\l$ and $\l'_{m}$ goes to $\l'$. We have $\L=\l\l'$. 
Then \eqref{equ3-rapportnum2} yields 
$$\nu_{m}(\Om_{A})\to_{\minf} \dfrac{\l'K}{1+\l'K}\text{ and }\nu_{m}(\Om_{D})\to_{m\to+\8}\dfrac1{1+\l'.K}. $$

The same arguments as previously show that \eqref{eq1-hmgene3} and \eqref{eq1-hmgene4} still hold, since in that case $\l_{m}$ goes to $\l\in ]0,+\8[$ and $\nu_{m}(\Om_{A})$ and $\nu_{m}(\Om_{D})$ converge to positive real numbers. 
This yields that $H_{m}$ is uniformly bounded in $\Om$, and \eqref{eq-hmequiconti} shows that one can consider accumulation points for the   $||\ ||_\8$-norm.

Then, \eqref{eq1-hmgene3} and \eqref{eq1-hmgene4} show that $H_{m|\Om_{A}}$ converges to $K_{A}\wt H_{A}$ while $H_{m|\Om_{D}}$ converges to $K_{D}\wt H_{D}$, with 
\begin{equation}
\label{eq2-KAKDgenegene}
\dfrac{K_{D}}{K_{A}}=\dfrac{e^{P}}{\l\int \wt H_{A'}d\nu_{A}\int \wt H_{D}e^{\phi}d\de^{*}\nu_{A'}}
\end{equation}

Furthermore, the normalization condition yields:
\begin{align*}
 1&=\int H_{m}\,d\nu_{m}=\int_{\Om_{A}}H_{m}\,d\nu_{m}+\int_{\Om_{D}}H_{m}\,d\nu_{m}\\
 &= \nu_{m}(\Om_{A})\int_{\Om_{A}}H_{m}\,d\nu_{m|A}+\nu_{m}(\Om_{D})\int_{\Om_{D}}H_{m}\,d\nu_{m|D},\\
 \intertext{ and doing $m\to+\8$ we get }
 1&=\frac{\l'K}{1+\l'K}K_{A}\dfrac{\int \wt H_{A}e^{\phi}d\al^{*}\nu_{D}}{\int e^{\phi}d\al^{*}\nu_{D}}+K_{D}\dfrac{\int \wt H_{D}e^{\phi}d\de^{*}\nu_{A}}{\int e^{\phi}d\de^{*}\nu_{A}}\dfrac1{1+\l'K}\\
 \intertext{hence}
 1&= \frac{\l'K}{1+\l'K}K_{D}e^{-P}\l\int \wt H_{A'}d\nu_{A}\int\wt H_{D}e^{\phi}\,d\de^{*}\nu_{A}\dfrac{\int \wt H_{A}e^{\phi}d\al^{*}\nu_{D}}{\int e^{\phi}d\al^{*}\nu_{D}}
 +K_{D}\dfrac{\int \wt H_{D}e^{\phi}d\de^{*}\nu_{A}}{\int e^{\phi}d\de^{*}\nu_{A}}\dfrac1{1+\l'K}\\
 &=K_{D}\dfrac{\int \wt H_{D}e^{\phi}d\de^{*}\nu_{A}}{\int e^{\phi}d\de^{*}\nu_{A}}\dfrac1{1+\l'K}\left(e^{-P}\l\l'K\int \wt H_{A'}d\nu_{A}\int\wt H_{A}e^{\phi}\,d\al^{*}\nu_{D}\dfrac{\int e^{\phi}d\de^{*}\nu_{A}}{\int e^{\phi}d\al^{*}\nu_{D}}+1
\right)\\
\intertext{ we replace $\l\l'=\L$ and $K$ by their value in the bracket}
1&=2K_{D}\dfrac{\int \wt H_{D}e^{\phi}d\de^{*}\nu_{A}}{\int e^{\phi}d\de^{*}\nu_{A}}\dfrac1{1+\l'K}.
\end{align*}
Finally we get 
\begin{equation}
\label{equ1-KDbornegenegene}
K_{D}=\dfrac{\int e^{\phi}d\de^{*}\nu_{A}+e^{-P}\l'\int_{\Om_{A}}e^{\phi}\,d\al^{*}\nu_{D}\int \wt H_{D'}d\nu_{D}\int \wt H_{D}e^{\phi}\,d\de^{*}\nu_{A}}{2\int \wt H_{D}e^{\phi}\,d\de^{*}\nu_{A}}.
\end{equation}
\begin{equation}
\label{equ1-KAbornegenegene}
\hskip-2cmK_{A}=K_{D}e^{-P}\l\int\wt H_{D}e^{\phi}\,d\de^{*}\nu_{A}\int\wt H_{A'}d\nu_{A}=\dfrac{\l\int\wt H_{A'}d\nu_{A}\int e^{\phi}d\de^{*}\nu_{A}+\l\l'\int_{\Om_{A}}e^{\phi}\,d\al^{*}\nu_{D}\int \wt H_{D'}d\nu_{D}\int \wt H_{A'}d\nu_{A}\int \wt H_{D}e^{\phi}\,d\de^{*}\nu_{A}}{2e^{P}}.
\end{equation}
\begin{remark}
\label{rem-kakdcoherent}
We emphasize that these equalities are coherent with the ones obtained in the case $\l=+\8$
 and $\l'=0$. This is immediate for $K_{D}$. For $K_{A}$, in the case $\l=+\8$ the correct normalization is $\dfrac1{\l_{m}}H_{m|\Om_{A}}$ hence we have to divide by $\l$ in \eqref{equ1-KAbornegenegene} before replacing $\l'$ by 0. 
 $\blacksquare$\end{remark}

\subsection{Computation for $r_{DA,m}$}
\label{subsec-computereturn}
\subsubsection{A general computation}

\begin{eqnarray*}
\int r_{DA,m}d\mu_{m}&=& \sum_{n\ge 0}\sum_{[\de_{i}\al_{k}\al^{n_{m}-2}\al^{n}\al_{j}\de_{l}]}\int (n+n_{m})H_{m}\BBone_{[\de_{i}\al_{k}\al^{n_{m}-2}\al^{n}\al_{j}\de_{l}]}d\nu_{m}\\
&=& \sum_{n\ge 0}\sum_{[\de_{i}\al_{k}]}e^{-(n+n_{m})P_{m}}(n+n_{m})\int\CL_{A}^{n}(\CL_{A'}^{n_{m}-1}(H_{m}(\de_{i}*)e^{\phi(\de_{i}*)}\BBone_{[\al_{k}]}))d\nu_{m}\\
&&\text{ where we used conformality for $\nu_{m}$}\\
&=& \sum_{n\ge 0}\sum_{[\de_{i}\al_{k}]}e^{-(n+n_{m})P_{m}}(n+n_{m})e^{-nP}\int\left(\int_{[\al_{k}]} \CL_{A'}^{n_{m}-1}(H_{m}(\de_{i}*)e^{\phi(\de_{i}*)})d\nu_{A}\wt H_{A}\right.\\
&&+\left.e^{-n\rho}\Psi_{A}(\CL_{A'}^{n_{m}-1}(H_{m}(\de_{i}*)e^{\phi(\de_{i}*)}\BBone_{[\al_{k}]}))\right)d\nu_{m}.
\end{eqnarray*}

As before, we get 
\begin{align*}
\hskip-2cm \CL_{A'}^{n_{m}-1}(H_{m}(\de_{i}*)e^{\phi(\de_{i}*)}\BBone_{[\al_{k}]})&= e^{(n_{m}-1)P_{A'}}\left(\int H_{m}(\de_{i}*)e^{\phi(\de_{i}*)}\BBone_{[\al_{k}]}d\nu_{A'}\wt H_{A'}+e^{-(n_{m}-1)\rho}\Psi_{A'}(H_{m}(\de_{i}*)e^{\phi(\de_{i}*)}\BBone_{[\al_{k}]})\right)\\
 \intertext{ that we can rewrite as}
 &= (1+\eps(m))e^{(n_{m}-1)P_{A'}}\left[\int H_{m}e^{\phi}\BBone_{[\de_{i}\al_{k}]}d\de^{*}\nu_{A'}\right]\wt H_{A'}
 \end{align*}
with $\eps(m)\to_{\minf}$, since we have seen above that the ratio $\disp \dfrac{\int e^{\phi}H_{m}d(\de^{*}\nu_{A'})}{||H_{m|\Om_{D}}||_{\8}}$ is bounded away from zero.

Hence we get, 
\begin{align*}
 \hskip -2.5cm\int r_{DA,m}d\mu_{m}&=(1+\eps(m))e^{-P}e^{-(n_{m}-1)(P-P_{A'})}\left[\int_{\Om_{D}}H_{m}e^{\phi}d\de^{*}\nu_{A'}\right]\sum_{n\ge 0}(n+n_{m})e^{-(n+n_{m})(P_{m}-P)}\left(\left[\int \wt H_{A'}d\nu_{A}\right]\left[\int \wt H_{A}d\nu_{m}\right]\right.\\
 &+\left.  e^{-n\rho}\int\Psi_{A}^{n}(\wt H_{A'}) d\nu_{m}   \right).
\end{align*}

We let the reader check that $\disp \sum_{n\ge 0}(n+n_{m})e^{-(n+n_{m})(P_{m}-P)}$ behaves like 
$\disp \frac{e^{-n_{m}(P_{m}-P)}}{P_{m}-P}\left(n_{m}+\dfrac1{P_{m}-P}\right)$ while $\disp \sum_{n\ge 0}(n+n_{m})e^{-(n+n_{m})(P_{m}-P-\rho)}$ behaves like $e^{-n_{m}(P_{m}-P)}n_{m}$.

Therefore, we finally get (with some rewritings on equivalents)
\begin{equation}
\label{equ1-rdagenegene}
\int r_{DA,m}d\mu_{m}\sim_{\minf}e^{-P}\l_{m}\left(n_{m}+\dfrac1{P_{m}-P}\right)\left[\int_{\Om_{D}}H_{m}e^{\phi}d\de^{*}\nu_{A'}\right]\left[\int \wt H_{A'}d\nu_{A}\right]\left[\int \wt H_{A}d\nu_{m}\right].
\end{equation}

Doing the same work with $r_{AD,m}$ we get 
\begin{equation}
\label{equ1-rationreturngenegene}
\dfrac{\int r_{DA,m}d\mu_{m}}{\int r_{AD,m}d\mu_{m}}\sim_{\minf}\dfrac{\l_{m}}{\l'_{m}}
\dfrac{\nu_{m}(\Om_{A})}{\nu_{m}(\Om_{D})}
\left(\dfrac{n_{m}+\dfrac1{P_{m}-P}}{n'_{m}+\dfrac1{P_{m}-P}}\right)
\dfrac{\left[\int_{\Om_{D}}H_{m}e^{\phi}d\de^{*}\nu_{A'}\right]}{\left[\int_{\Om_{A}}H_{m}e^{\phi}d\al^{*}\nu_{D'}\right]}
\dfrac{\left[\int \wt H_{A'}d\nu_{A}\right]}{\left[\int \wt H_{D'}d\nu_{D}\right]}
\dfrac{\left[\int \wt H_{A}d\nu_{m|A}\right]}{\left[\int \wt H_{D}d\nu_{m|D}\right]}
\end{equation}

Using \eqref{equ3-rapportnum2} and Prop. \ref{prop2-cvnumgenemain1}, \eqref{equ1-rationreturngenegene} yields 

\begin{equation}
\label{equ2-rationreturngenegene}
\dfrac{\int r_{DA,m}d\mu_{m}}{\int r_{AD,m}d\mu_{m}}\sim_{\minf}e^{-P}
\l_{m}
\left(\dfrac{n_{m}+\dfrac1{P_{m}-P}}{n'_{m}+\dfrac1{P_{m}-P}}\right)
\dfrac{\left[\int_{\Om_{D}}H_{m}e^{\phi}d\de^{*}\nu_{A'}\right]}{\left[\int_{\Om_{A}}H_{m}e^{\phi}d\al^{*}\nu_{D'}\right]}
\left[\int \wt H_{A'}d\nu_{A}\right]\left[\int \wt H_{A}e^{\phi}d\al^{*}\nu_{D'}\right].
\end{equation}

\subsubsection{Proof in the case $\l_{m}\to+\infty$}
In that case  $\dfrac1{\l_{m}}H_{m|\Om_{A}}$ goes to $K_{A}\wt H_{A}$ and $H_{m|\Om_{D}}$ goes to $K_{D}\wt H_{D}$, with  (see \ref{eq1-KAKDgenegene}):
$$\dfrac{K_{D}}{K_{A}}=\dfrac{e^{P}}{\int \wt H_{A'}d\nu_{A}\int \wt H_{D}e^{\phi}d\de^{*}\nu_{A'}}$$
Hence 
\begin{equation}
\label{equ3-rationreturngenegene}
\dfrac{\int r_{DA,m}d\mu_{m}}{\int r_{AD,m}d\mu_{m}}\sim_{\minf}
\left(\dfrac{n_{m}+\dfrac1{P_{m}-P}}{n'_{m}+\dfrac1{P_{m}-P}}\right).
\end{equation}

We remind $\disp \l_{m}:=\dfrac{e^{(n_{m}-1)(P_{A'}-P_{m})}}{1- e^{-(P_{m}-P)}}$ which yield 
$$\l_{m}\asymp \dfrac{e^{-n_{m}(P_{m}-P_{A'})}}{P_{m}-P}.$$
In the case $\l_{m}\to+\8$, for $m$ sufficiently large we get $\dfrac1{P_{m}-P}\ge e^{n_{m}(P-P_{A'})}$, hence $\dfrac1{P_{m}-P}$ is much bigger than $n_{m}$. 

Furthermore,  assumption $\dfrac{n_{m}}{n'_{m}}\to\theta\in]0,+\8[$  yields $\dfrac1{P_{m}-P}\ge e^{n'_{m}\theta(P-P_{A'})}$, and then $\dfrac1{P_{m}-P}$ is much bigger than $n'_{m}$.

\begin{remark}
\label{rem-theta}
This is where we use $\theta\in ]0,+\8[$.
$\blacksquare$\end{remark}

Then, \eqref{equ3-rationreturngenegene} shows that $\dfrac{\int r_{DA,m}d\mu_{m}}{\int r_{AD,m}d\mu_{m}}$ goes to 1.  Then,  Proposition \ref{prop-returnmeas} finishes the proof.

\subsubsection{Proof in the case $\l_{m}\nrightarrow0,+\infty$}
Then if $\l_{m}$ goes to $\l$, $H_{m|\Om_{A}}$ goes to $K_{A}\wt H_{A}$ and $H_{m|\Om_{D}}$ goes to $K_{D}\wt H_{D}$, with (see \eqref{eq2-KAKDgenegene})
$$\dfrac{K_{D}}{K_{A}}=\dfrac{e^{P}}{\l\int \wt H_{A'}d\nu_{A}\int \wt H_{D}e^{\phi}d\de^{*}\nu_{A'}}.
$$
 and we still get 
$$\dfrac{\int r_{DA,m}d\mu_{m}}{\int r_{AD,m}d\mu_{m}}\sim_{\minf}
\left(\dfrac{n_{m}+\dfrac1{P_{m}-P}}{n'_{m}+\dfrac1{P_{m}-P}}\right).
$$
Then the arguments are the same: $\dfrac1{P_{m}-P}$ is exponentially bigger than $n_{m}$ and $n'_{m}$, and  
$\dfrac{\int r_{DA,m}d\mu_{m}}{\int r_{AD,m}d\mu_{m}}$ goes to 1.  Again,  Proposition \ref{prop-returnmeas} finishes the proof. 

\section{Proof of Theorem \ref{th-main2-1}} \label{sec-proofmain2-1}
Theorem \ref{th-main2-1} can be obtained from Theorem \ref{th-main1-2} doing $A'=A$ and $D'=D$. In that case, all the computations done in Section \ref{sec-proof-main-1-2}, only the conclusion has to change, since, in that case 
$$P_{A'}=P=P_{D'}.$$ 
Note that in that case, $\nu_{A}=\nu_{A'}$, $\nu_{D'}=\nu_{D}$ and $\disp\int\wt H_{A'}d\nu_{A}=1=\int\wt H_{D'}d\nu_{D}$.

Equation \eqref{equ3-rationreturngenegene} is still valid. 

\subsubsection{The case $\l_{m}\to+\infty$}

In that case $\l'_{m}$ goes to $0$ as $m$ goes to $+\8$. Furthermore, 
$$\l'_{m}\asymp \dfrac{e^{n'_{m}(P-P_{m})}}{P_{m}-P},$$
which shows that $n'_{m}(P_{m}-P)$ goes to $+\8$. 

Note that \eqref{equ3-rationreturngenegene} is equivalent to 
$$\dfrac{\int r_{DA,m}d\mu_{m}}{\int r_{AD,m}d\mu_{m}}\sim_{\minf}\dfrac{n_{m}(P_{m}-P)+1}{n'_{m}(P_{m}-P)}$$

Then two cases occur : 
\begin{enumerate}
\item If $n_{m}(P_{m}-P)$ goes to $+\8$, then $\dfrac{\int r_{DA,m}d\mu_{m}}{\int r_{AD,m}d\mu_{m}}$ behaves as $\dfrac{n_{m}}{n'_{m}}$ and goes to $\theta$. Proposition \ref{prop-returnmeas} finishes the proof. 
\item If $n_{m}(P_{m}-P)$ is bounded from above, then $\dfrac{\int r_{DA,m}d\mu_{m}}{\int r_{AD,m}d\mu_{m}}$ goes to 0. But in that case note that 
$\dfrac{n_{m}}{n'_{m}}=\dfrac{n_{m}(P_{m}-P)}{n'_{m}(P_{m}-P)}$ and then $\theta=0$. Again, Proposition \ref{prop-returnmeas} finishes the proof.
\end{enumerate}

\subsubsection{The case $\l_{m}\nrightarrow 0,+\infty$}
In that case both $n_{m}(P_{m}-P)$ and $n'_{m}(P_{m}-P)$ go to $+\8$. 
Then, $$\dfrac{\int r_{DA,m}d\mu_{m}}{\int r_{AD,m}d\mu_{m}}\sim_{\minf}\dfrac{n_{m}(P_{m}-P)+1}{n'_{m}(P_{m}-P)+1}$$ 
and $\dfrac{n_{m}}{n'_{m}}=\dfrac{n_{m}(P_{m}-P)}{n'_{m}(P_{m}-P)}$ yields 
$$\dfrac{\int r_{DA,m}d\mu_{m}}{\int r_{AD,m}d\mu_{m}}\to_{\minf}\theta.$$

\section{Proof of Theorem \ref{th-main1-3}}
\label{sec-proofmain3}

\subsection{Construction of $\wt T_{\eps}$}

Following \cite{Horan20} we pick some map $T_{\eps_{0}}$ which admits a Markov partition as in Figure \ref{fig-map2}.
\begin{figure}[htbp]
\begin{center}
\includegraphics[scale=0.5]{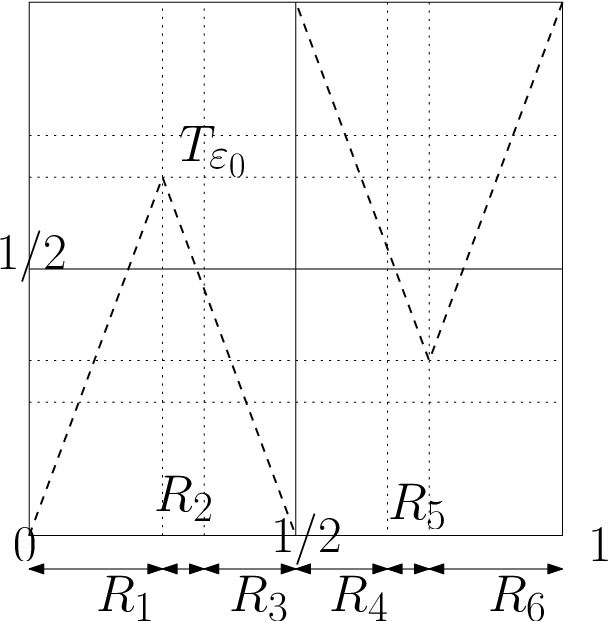}
\caption{{\bf Map $T_{\eps_{0}}$ and Markov partition}}
\label{fig-map2}
\end{center}
\end{figure}

The map $T_{\eps_{0}}$ is piecewise affine with derivative bigger than 2 in absolute value. 

The associated transition matrix is 
$M_{0}:=\left(\begin{array}{cccccc}1 & 1 & 1 & 0 & 0 & 0 \\0 & 0 & 0 & 1 & 0 & 0 \\1 & 1 & 1 & 0 & 0 & 0 \\0 & 0 & 0 & 1 & 1 & 1 \\0 & 0 & 1 & 0 & 0 & 0 \\0 & 0 & 0 & 1 & 1 & 1\end{array}\right)$ and both matrices $A$ and $D$ are equal to 
$\left(\begin{array}{ccc}1 & 1 & 1 \\0 & 0 & 0 \\1 & 1 & 1\end{array}\right)$.

One fixes two increasing sequences  $n_{m}$ and $n'_{m}$ of integers going to $+\8$ and such that $\dfrac{n_{m}}{n'_{m}}$ goes to some $\theta\in ]0,+\8[$ (to get Theorem \ref{th-main1-2}). 
The two pre-images by $T_{\eps_{0}}$ of $\frac12$ by the left-subsystem are respectively denoted by $a$ and $a'$. The preimages for the right-subsystems are respectively denotes by $c$ and $c'$. $b$ is the preimage of $\frac14$ by the first branch of the left-subsystem (see Fig. \ref{fig-maps6}).

Then, for each $m$, one constructs the map $T_{\eps_{m}}$ as follows. The two subsystems (left and right) are constructed from $n_{m}$ and $n'_{m}$, thus the symmetry may be broken. For simplicity we denote it by $T_{\eps_{m}}$ even if it will actually depend on some $\eps'_{m}$ on the left hand side and some $\eps_{m}$ on the right hand side. 

\subsubsection{Construction of the right subsystem}. 

We adjust $\eps_{m}$ such that $T_{\eps_{0}}^{n_{m}}(d)=\frac14$, with $d=\frac12-\eps_{m}$. This is done such that $T_{\eps_{0}}(d)$ stays for $n_{m}-1$ iterates in $R_{1}$. 
Then, we pick a piecewise expanding map on the interval $\left[\frac12,1\right]$, with two branches, as the one in dash-dotted line in Figure \ref{fig-maps6}. 
The peak has value $\frac12-\eps_{m}=d$.

\begin{figure}[htbp]
\begin{center}
\includegraphics[scale=0.7]{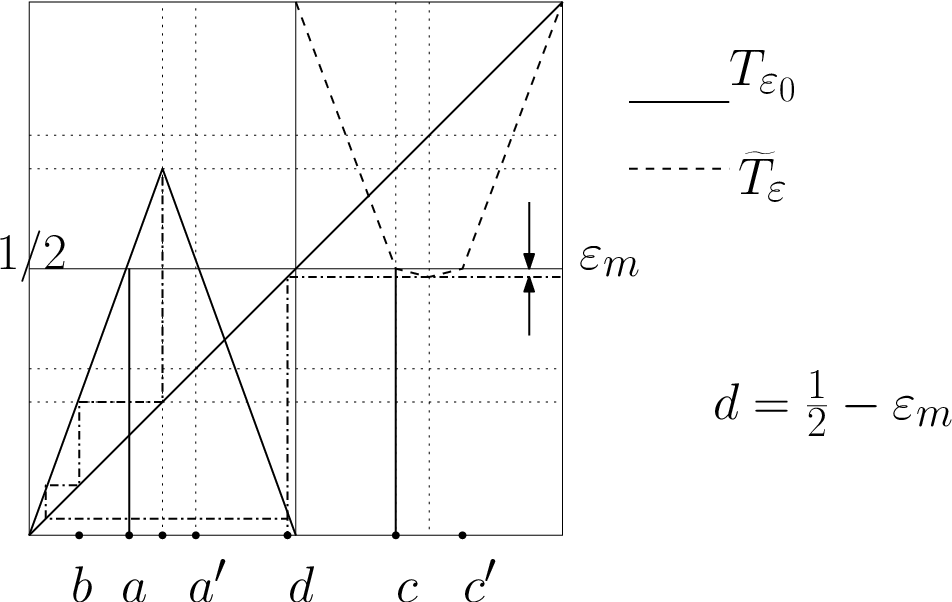}
\caption{\bf construction of the right-hand side of the map $T_{\eps}$ at step $m$}
\label{fig-maps6}
\end{center}
\end{figure}

The map $\wt T_{\eps_{m}}$ (dashed line in Figure \ref{fig-maps6}) coincides with $T_{\eps_{0}}$ on $\left[\frac12,c\right]$ and $[c',1]$. On $\left[c,\frac34\right]$ and $\left[\frac34,c'\right]$ it is monotonous and affine with 
$$\wt T_{\eps_{m}}(c)=\wt T_{\eps_{m}}(c')=\frac12\text{ and }\wt T_{\eps_{m}}\left(\frac34\right)=\frac12-\eps_{m}.$$

\subsubsection{Construction of the left subsystem}

The construction in $\left[0,\frac12\right]$ is done in a similar way, replacing $n_{m}$ by $n'_{m}$, $c$ and $c'$ by $a$ and $a'$, $\eps_{m}$ by $\eps'_{m}$ and $d$ by $\frac12+\eps'_{m}$.  

\subsubsection{The map $\wt T_{\eps_{m}}$}

We set $A'=\left(\begin{array}{ccc}1 & 0 &0  \\0 & 0 & 0 \\1 & 0 &0 \end{array}\right)$ and $D'=\left(\begin{array}{ccc}1 & 0 &0  \\0 & 0 & 0 \\1 & 0 &0 \end{array}\right)$. Then $A'<A$ and $D'<D$ hold. We claim that $\wt T_{\eps_{m}}$ is conjugated\footnote{ as usually, the conjugacy is up to a countable set of points, that comes from the boundaries of the Markov partition.} to $\S_{m}$ (with the matrices $A$, $D$, $M_{0}$, $A'$ and $D'$).

We associate to $R_{i}$ the letter $\al_{i}$ with $i=1,2,3$ and to $R_{i}$ the letter $\de_{i}$ with $i=4,5,6$. 
If $x$ is in $[0,1]$, we can associate to $x$ an infinite word, $\ul x$ within the alphabet $\{\al_{1},\al_{2},\al_{3},\de_{4},\de_{5},\de_{6}\}$. The word is unique for points $x$ whose orbit avoids the boundaries of the $R_{i}$'s.

After $\al_{1}$ and $\al_{3}$ one can get any $\al_{i}$. After $\al_{1}$ one can also get $\de_{4}$ and after $\al_{2}$ one can only get $\de_{4}$. After $\al_{1}\de_{4}$ or $\al_{3}\de_{4}$ one can only see $\de_{1}^{n'_{m}-2}$ and then any $\de_{i}$. 

Similarly, after $\de_{4}$ and $\de_{6}$ one can see any $\de_{i}$. After $\de_{6}$ one can also see $\al_{3}$, and after $\de_{5}$ one can only see $\al_{3}$. In these later cases, one has to see $\al_{1}^{n_{m}-2}$ and then any $\al_{i}$. 

This shows that each $\ul x$ belongs to $\S_{m}$. 

\bigskip
Conversely, we get 
\begin{enumerate}
\item $\wt T_{\eps_{m}}(R_{1})=R_{1}\cup R_{2}\cup R_{3}\cup \left[\frac12,\frac12+\eps'_{m}\right]$. 
\item $\left[\frac12,\frac12+\eps'_{m}\right]$ is included into $R_{4}$, and $\wt T^{n'_{m}}_{\eps_{m}}(\left[\frac12,\frac12+\eps'_{m}\right])=\left[\frac34,1\right]=R_{6}$. 
\item $\wt T_{\eps_{m}}(R_{1})=\left[\frac12,\frac12+\eps'_{m}\right]$. 
\item $\wt T_{\eps_{m}}(R_{1})=R_{1}\cup R_{2}\cup R_{3}$.
\item $\wt T_{\eps_{m}}(R_{4})=R_{4}\cup R_{5}\cup R_{6}$.
\item $\wt T_{\eps_{m}}(R_{5})=\left[\frac12-\eps_{m}\frac12\right]$.
\item $\wt T_{\eps_{m}}(R_{1})=R_{4}\cup R_{5}\cup R_{6}\cup \left[\frac12-\eps_{m}\frac12\right]$.
\item $\left[\frac12-\eps_{m},\frac12\right]$ is included into $R_{3}$ and $\wt T^{n_{m}}_{\eps_{m}}(\left[\frac12-\eps_{m},\frac12\right])=R_{1}$. 
\end{enumerate}
This yields, that if $\ul x$ is an infinite word eligible for $\S_{m}$, then, for every $n$, the cylinder $[x_{0}\ldots x_{n}]$ corresponds in $[0,1]$ to a non empty segment $[a_{n},b_{n}]$. Furthermore, the sequence of the segments is decreasing and thus the infinite intersection contains at least one point.

\medskip
It remains to check that two different points in $[0,1]$ must have two different codes in $\S_{m}$. For that, it is sufficient to check that any point has a positive upper Lyapunov exponent. 

For that, we pick a point $x$, and we assume it belongs to $[a,a']$. For simplicity we assume it belongs to $\left[a,\frac14\right]$. 
Then $\wt T'_{\eps'_{m}}(x)=\dfrac{\eps_{m}}{\frac14-a}$. 
For the next $n'_{m}$ iterates, the orbit belongs to $R_{4}$ where there is constant expansion, say $\l$. 
Furthermore, we have 
$$\l^{n'_{m}}\eps' _{m}=\frac14,$$
since $\wt T^{n'_{m}}_{\eps_{m}}(\frac12+\eps'_{m})=\frac34$ and $T_{\eps_{m}}$ is affine. Hence we get 
$$\left(\wt T^{n'_{m}+1}\right)'(x)=\l^{n_{m}}\frac{\eps_{m}}{\frac14-a}=\dfrac1{1-4a}>1.$$
Then, the orbit may stay where $\wt T_{\eps_{m}}$ is uniformly expanding for some iterates (corresponding to the $n$ for $\de^{n}$ in the computations  in Section \ref{sec-proof-main-1-2}), and finally fall into $[c,c']$. Then the computation is similar to the one we have done. 

The reasoning is the same for $x\in \left[\frac14,a'\right]$. 

This yields that if we decompose the orbit in 
$$n'_{m}+1, N_{1},n_{m}+1,N_{2},\ldots$$
the derivative after $k$ loops is at least 
$$\ga^{k}\l^{N_{1}+\ldots N_{k}},$$
with $\ga=\min\left(\dfrac1{1-4a},\dfrac1{4a'-1},\dfrac1{3-4c},\dfrac1{4c'-3}\right)$. 
The length of the orbit is $\sim \frac{k}2(n'_{m}+n_{m}+2)+N_{1}+\ldots N_{k}$. 
Then, the mean value for the log of the derivative is greater than 
$$\dfrac{k\log\ga+\sum_{i=1}^{k}N_{i}\log \l}{\frac{k}2(n'_{m}+n_{m}+2)+\sum_{i=1}^{k}N_{i}}>\dfrac1{n'_{m}+n_{m}+2}\min(\log\ga,\log\l)=:\log\ga'>0.$$

This shows that the sequence of intervals $[a_{n},b_{n}]$ is decreasing and for infinitely many $n$ we get 
$$0\le b_{n}-a_{n}\le (\ga')^{-n},$$
which goes to 0 as $n$ goes to $+\8$. Therefore, $\wt T_{\eps_{m}}$ is conjugated to $\S_{m}$. 

\subsection{The map $T_{\eps}$}

\subsubsection{Principle of the construction}

We will construct a map say $T_{\eps}$ which is conjugated to $\wt T_{\eps_{m}}$ and with shape like a double tent-maps with small hole (see Fig.  \ref{fig-Teps}).  It will be almost everywhere expanding. 

\begin{figure}[htbp]
\begin{center}
\includegraphics[scale=0.5]{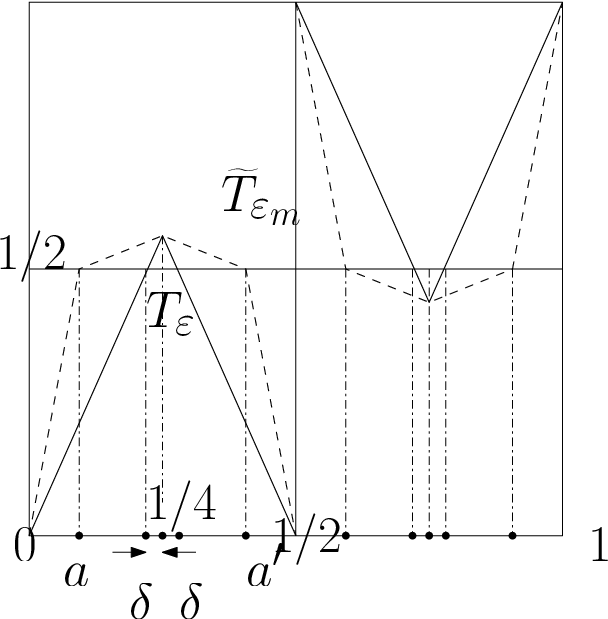}
\includegraphics[scale=0.5]{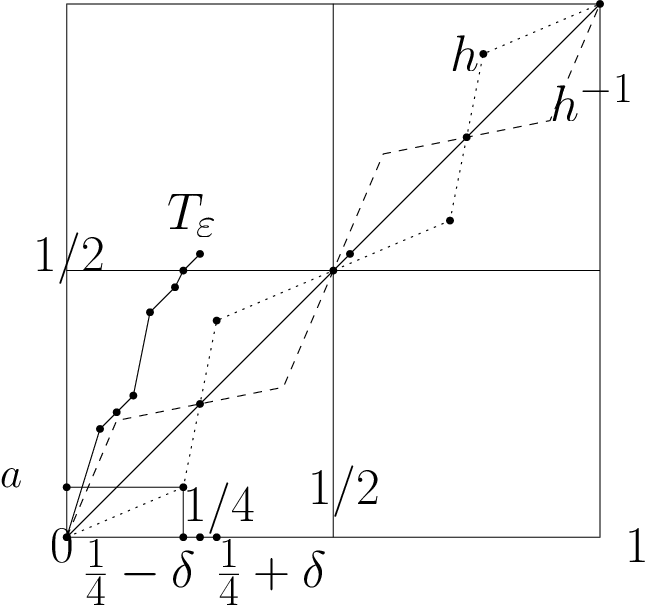}
\caption{{\bf Construction of $T_{\eps}$. Conjugacies $h$, $h^{-1}$ and first branch of $T_{\eps}$}}
\label{fig-Teps}
\end{center}
\end{figure}

The construction of the conjugacy map $h$ is done by induction. The map $\wt T_{\eps}$ is piecewise affine on the intervals $[0,a]$, $[a,\frac14]$, $[\frac14, a']$ and $[a',\frac12]$ (for the first half). It has symmetry in the sense that $a'=\frac12-a$.

The construction is done in a similar way on the second half of the interval. 

We pick  some positive $\delta$  very close to 0. 

The map $h$ maps $[\frac14-\delta,\frac14]$ in an affine way on $[a,\frac14]$, and the interval $[\frac14,\frac14+\delta]$ in an affine way on $[\frac14,a']$. It has thus a slope proportional to $\dfrac1\delta$. Therefore, $h^{-1}$ has slope proportional to $\delta$ and on the interval $[a,a']$. As we shall make $\delta$ decrease to 0, we need to balance this very small slope in view to get a global map $T_{\eps}=h^{-1}\circ \wt T_{\eps}\circ h$ that is expanding. 

This means that  $h$ must have a big slope on the interval  $h^{-1}\wt T^{-1}_{\eps}([a,a'])$. Again, this big slope for $h$ generates a small slope for $h^{-1}$, that has to be balanced by a big slope in $h^{-1}\wt T^{-2}_{\eps}([a,a'])$, and so on. 

More precisely, we consider the map with 2 branches on $[0,a]$ and $[a',\frac12]$ as in Figure \ref{fig-cecilia10}
\begin{figure}[htbp]
\begin{center}
\includegraphics[scale=0.5]{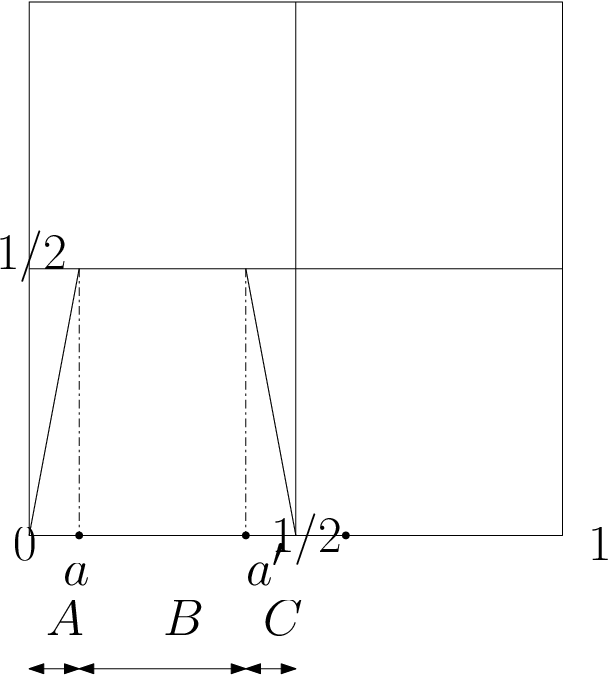}
\caption{{\bf  Cantor set and interval to compute $h$}}
\label{fig-cecilia10}
\end{center}
\end{figure}
Then we consider all the pre-images of the interval $B$ by the map. They are all intervals that we can write as $AA\ldots CAB$, that is a finite word in $A$ and $C$ and then the letter $B$. 
Such an interval has length $(\frac12-2a)(2a)^{n}$ if the word in $A$ and $C$ in front of the final (and unique) $B$ has length $n$. 

The union of these intervals has full Lebesgue measure in $[0,a]\cup [a',\frac12]$ since Lebesgue almost every point will eventually enter into the interval $B$. 

Furthermore, note that $\wt T_{\eps}$ maps an interval $D_{1}D_{2}\ldots D_{n}B$ onto the interval $D_{2}\ldots D_{n}B$ with $D_{i}=A,C$. 

Then, the main idea to construct $h$  is the following:  we shall pick a fat triadic Cantor set (that is with positive Lebesgue measure) and identify the intervals that appear in its construction with the intervals $D_{1}D_{2}\ldots D_{n}B$ with $D_{i}=A,C$. 

\subsubsection{The fat Cantor set and construction of the conjugacy}

We pick $\be<\frac13$ and construct in $[0,1]$ the fat Cantor set as follows. The exact magnitude of $\be$ will be make precise later. 
\begin{enumerate}
\item Step 1, we remove one interval, say $C_{1}$ of length $\be$ centered in the middle. There are two surviving interval called $E_{1}$'s. 
\item Step 2, we remove in each surviving interval $E_{1}$ an interval, $C_{2}$ of length $\beta^{2}$, centered in the middles. There are 4 surviving intervals, $E_{2}$'s. 
\item Step $n$, we remove in each surviving interval $E_{n-1}$ a centered interval $C_{n}$ of length $\beta^{n}$. 
\end{enumerate}
The surviving Cantor set has positive Lebesgue since the union of removed intervals has total length 
$$\be+2\be^{2}+4\beta^{3}+\ldots +2^{n-1}\be^{n}+\ldots=\dfrac{\be}{1-2\be}<1,$$
since $\be<\frac13$.

We rescale the total length by the factor $\frac14-\delta$ and for simplicity keep the names of the intervals. 
Note that by construction, each $C_{n}$ is included into one $E_{n-1}$, and each $E_{n}$ is between\footnote{except the first and the last ones. } two $C_{j}$ and $C_{j'}$, with $j,j'\le n$.

 We also denote by $B_{n}$ and interval in $[0,a]$ (or $[a',\frac12]$) of the form $D_{1}\ldots D_{n}B$ with $D_{i}=A,C$. 
There is a canonical order on these intervals given by the usual inequality $\le$ in $\R$. 

Each $E_{n}$ has length $|E_{n}|=(\frac14-\delta)\dfrac{1-\be\frac{1-(2\be)^{n}}{1-2\be}}{2^{n}}$, and each 
$C_{n}$ has length $|C_{n}|=(\frac14-\delta)\be^{n}$. 
Furthermore, $B_{n}$ has length $|B_{n}|=(\frac12-2a)(2a)^{n}$.

\medskip
The map $h_{n}$ is obtained by induction in the following way:
\begin{enumerate}
\item Step 0, $h_{0}$ is the increasing affine map that maps $[0,\frac14-\delta]$ on $[0,a]$ and $[\frac14+\delta,\frac12]$ on $[a',\frac12]$. 
\item at step $n$, $h_{n}$  maps in an affine way any interval $C_{n}$ onto the corresponding (for the order relation) interval $B_{n}$. It links also in an affine way right end points to the left end point of the next interval.
\end{enumerate}
Figure \ref{fig-conjugacyh} shows how to get $h_{n+1}$ from $h_{n}$

\begin{figure}[htbp]
\begin{center}
\includegraphics[scale=0.5]{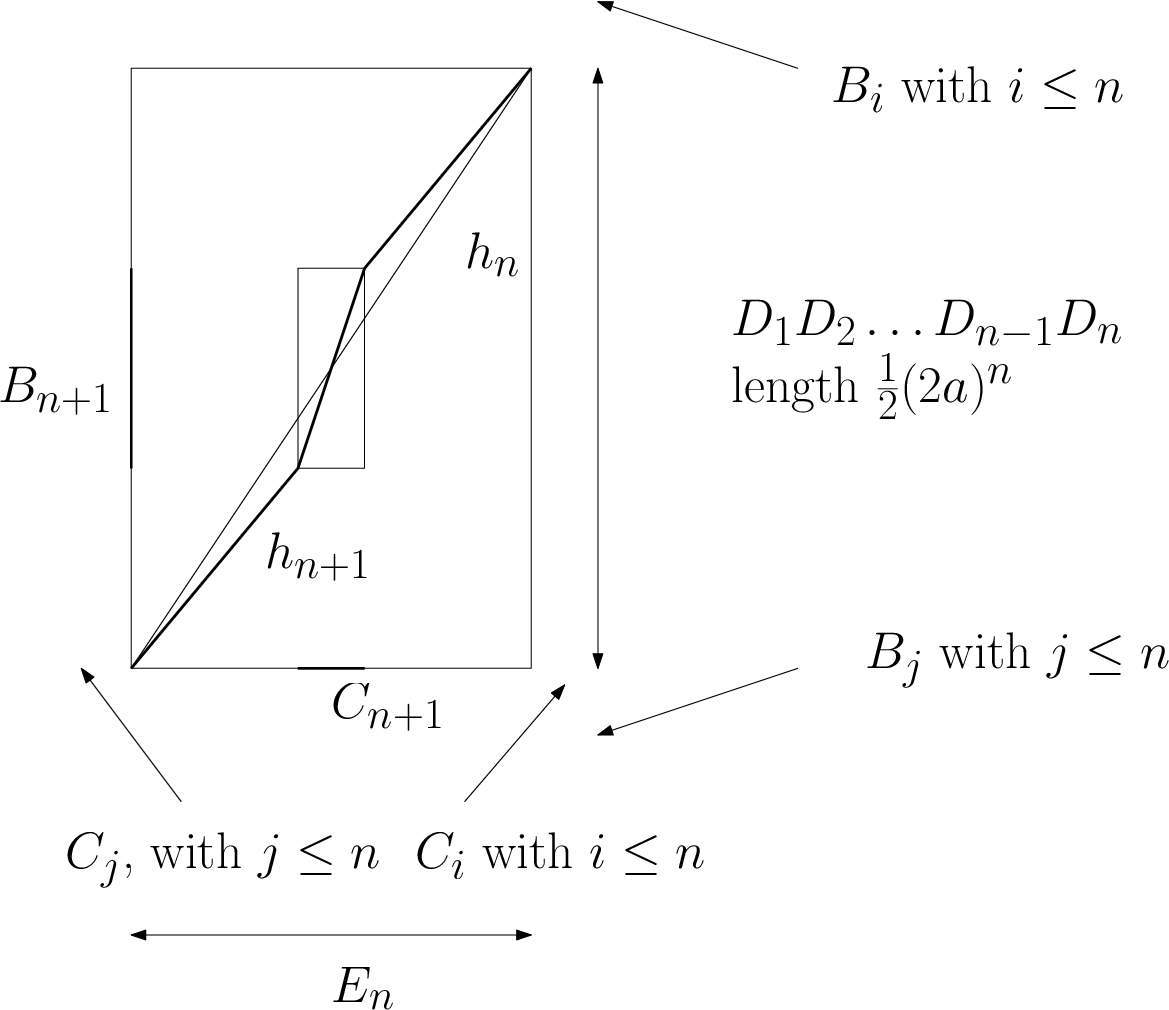}
\caption{{\bf Construction of $h$ by induction}}
\label{fig-conjugacyh}
\end{center}
\end{figure}

The difference between $h_{n}$ and $h_{n+1}$ is bounded by $|B_{n+1}|+|C_{n+1}|$ thus the sequence of $h_{n}$ is a Cauchy sequence of continuous maps for the $||\ ||_{\8}$-norm. 
This shows that is converges to an increasing maps from $[0,\frac14-\delta]$ onto $[0,a]$ (and also from $[\frac14+\delta,\frac12]$ onto $[a',\frac12]$). 

On each $C_{n}$, $h$ has constant slope $\dfrac{(\frac12-2a)(2a)^{n-1}}{(\frac14-\delta)\be^{n}}$. On each $E_{n}$, $h_{n}$ has slope 
$$\dfrac{\frac12(2a)^{n+1}}{(\frac14-\delta)(\frac{1-\be\frac{1-(2\be)^{n}}{1-2\be}}{2^{n+1}}-\be^{n+1})}.$$ 
As we said above we have the chain 
$$C_{n}\stackrel{h}{\longrightarrow}B_{n}\stackrel{\wt T_{\eps}}{\longrightarrow}B_{n-1}\stackrel{h^{-1}}{\longrightarrow}C_{n-1},$$
then, the global slope for $n\ge 2$ is 
$$\dfrac{(\frac12-2a)(2a)^{n-1}}{(\frac14-\delta)\be^{n}}\frac1{2a}\dfrac{(\frac14-\delta)\be^{n-1}}{(\frac12-2a)(2a)^{n-2}}=\frac1\be>1.$$
The global slope on $C_{1}$ is 
\begin{equation}\label{eq-condibeta}
\dfrac{(\frac12-2a)}{(\frac14-\delta)\be}\frac1{2a}\frac\delta{\frac14-a}=\frac{\de}{(\frac14-a)\be a}.
\end{equation}
For points that are in no $C_{n}$, the slope for $h_{n}^{-1}\circ \wt T_{\eps}\circ h_{n}$ is approximatively 2. 

\subsubsection{For points in the cusp}
points in the interval $[\frac14-\de,\frac14+\de]$ are send by $h$ in $[a,a']$ which is sent by $\wt T_{\eps}$ in the second half of the whole interval (namely $[\frac12,1]$). However we remind that the top point is sent, within our notations, to a middle point of some $B_{n}$ (but in the other half of the interval). Because $h$ is increasing, when applying $h^{-1}$ we get an interval $[\frac12,b_{n}]$, with $b_{n}$ the middle point of the $C_{n}$  associated to the previous $B_{n}$. The same holds for second half of the interval (with the cusp in the first half). 

Then, the construction matches.

\bibliographystyle{plain}
\bibliography{bib-overleaf}

\end{document}